\newtheorem{theorem}{Theorem}
\theoremstyle{plain}
\newtheorem{definition}{Definition}
\newtheorem{lemma}{Lemma}
\numberwithin{equation}{section}
\begin{document}
\title{Projections in moduli spaces of Kleinian groups}
\author{Hala Alaqad}
\address{Department of Mathematical Sciences\\
United Arab Emirate Universty}
\email{hala\_a@uaeu.ac.ae}
\author{Jianhua Gong}
\address{Department of Mathematical Sciences\\
United Arab Emirate Universty}
\email{j.gong@uaeu.ac.ae}
\author{Gaven Martin}
\address{School of Natural and Computational Sciences\\
Massey University,}
\email{g.j.martin@massey.ac.nz}
\subjclass[2000]{ Primary 30C60, 30F40; Secondary 20H10, 53A35}
\keywords{Principal character, moduli space, Kleinian group.}
\thanks{Research supported by UAEU UPAR Grant (31S315). This paper is partly
contained in the first author's PhD thesis.}

\begin{abstract}
A two-generator Kleinian group $\langle f,g \rangle$ can be naturally
associated with a discrete group $\langle f,\phi \rangle$ with the generator 
$\phi$ of order $2$ and where 
\begin{equation*}
\langle f,\phi f \phi^{-1} \rangle= \langle f,gfg^{-1} \rangle \subset
\langle f,g\rangle, \quad [ \langle f,g f g^{-1} \rangle: \langle f,\phi
\rangle]=2
\end{equation*}
This is useful in studying the geometry of Kleinian groups since $\langle
f,g \rangle$ will be discrete only if $\langle f,\phi \rangle$ is, and the
moduli space of groups $\langle f,\phi \rangle$ is one complex dimension
less. This gives a necessary condition in a simpler space to determine the
discreteness of $\langle f,g \rangle$.

The dimension reduction here is realised by a projection of principal
characters of two-generator Kleinian groups. In applications it is important
to know that the image of the moduli space of Kleinian groups under this
projection is closed and, among other results, we show how this follows from
J\o rgensen's results on algebraic convergence.
\end{abstract}

\maketitle

\section{Introduction}

It is shown in \cite{GehMar,GehMar2} that every two-generator Kleinian group 
$\left \langle f,\,g\right \rangle $ is determined uniquely up to conjugacy
by its \emph{principal character, }i.e.,\emph{\ }the triple of complex
parameters 
\begin{equation}
(\gamma ,\beta ,\tilde{\beta})=\left( \gamma (f,g),\beta (f),\beta
(g)\right) .  \label{character}
\end{equation}
Here 
\begin{equation}
\gamma(f,g)=\mathrm{tr}[f,g]-2, \quad \beta(f)=\mathrm{tr}^2(f)-4,\quad
\beta(g)=\mathrm{tr}^2(g)-4.
\end{equation}
Thus, the\textbf{\ }space of two-generator Kleinian groups $\left \langle
f,\,g\right \rangle $ can be identified as a subspace of the three complex
dimensional space $\mathbb{C}^{3}$. We define 
\begin{equation*}
\mathcal{D}=\left \{ (\gamma ,\beta ,\tilde{\beta}):(\gamma ,\beta ,\tilde{%
\beta})\text{ is the principal character of a Kleinian group}\right \}
\end{equation*}%
via the mapping 
\begin{equation*}
\left \langle f,\,g\right \rangle \longmapsto \left( \gamma \left(
f,\,g\right) ,\beta \left( f\right) ,\beta (g)\right) .
\end{equation*}
Things are set up so the group $\langle identity \rangle \longmapsto
(0,0,0)\in \mathbb{C}^3.$

Applying \cite[Lemma 2.29]{GehMar}, given a two-generator Kleinian group $%
\langle f,g\rangle $ with principal character $(\gamma ,\beta ,\tilde{\beta}%
) $ with $\gamma \neq \beta $, there is a two-generator discrete group $%
\Gamma _{\phi }=\langle f,\phi \rangle $ with principal character $(\gamma
,\beta ,-4)$, i.e., $\phi $ is elliptic of order $2.$ However, we can waive
the condition $\gamma \neq \beta $ (see Lemma \ref{2pars} below).

If the group $\Gamma_\phi$ is Kleinian (i.e., not virtually abelian), then
there are suites of inequalities on the entries of the principal character
which need to hold. One such is J\o rgensen' inequality, $%
|\gamma|+|\beta|\geq 1.$ Conversely, it is often easier to establish new
discreteness conditions of two generator groups with one generator of order
two since we know of many polynomial trace identities which yield a
polynomial semigroup action on the associated moduli space, and the dynamics
of this action can be used to investigate discreteness of groups. Let us
give one such example leading to J\o rgensen's inequality to motivate our
subsequent results. We note 
\begin{equation}
\mathrm{tr} [f,gfg^{-1}]-2 = (\mathrm{tr} [f,g]-2)(\mathrm{tr}
[f,gfg^{-1}]+2-\mathrm{tr}^2(f)),
\end{equation}
which we write as $\gamma(f,gfg^{-1}) =\gamma(\gamma-\beta)$ Following \cite%
{GMBull} we write $p_\beta(z)=z(z-\beta)$ and $p_\beta^{(n)}(z)=(p_\beta%
\circ p_\beta \circ \cdots \circ p_\beta)(z)$ then repeated application of
this identity followed by projection yields the sequence 
\begin{equation*}
(\gamma ,\beta ,-4) \mapsto (p_\beta(\gamma),\beta ,-4) \mapsto \cdots
\mapsto (p_\beta^{(n)}(\gamma) ,\beta ,-4) \mapsto \cdots
\end{equation*}
giving a sequence of traces of commutators $\{p_\beta^{(n)}(\gamma)%
\}_{n=0}^{\infty}$ in the original discrete group $\Gamma_\phi$. It is
usually not too difficult to prove such sequences cannot be infinite and
contain convergent subsequences. J\o rgensen's inequality is implied by the
assertion that $p_\beta^{(n)}(\gamma)\not \to0$, hence $|\gamma|>1+|\beta|$.
Every such trace identity yields a new inequality and the semigroup
structure is clearly seen in general by 
\begin{equation*}
(\gamma ,\beta ,-4) \mapsto (p_\beta(\gamma),\beta ,-4) \mapsto
(q_\beta(p_\beta(\gamma)),\beta ,-4)
\end{equation*}
where, for example, $q_\beta(z)=z(1+\beta-z)^2$ from the trace identity 
\begin{equation*}
\mathrm{tr}[f,gfg^{-1}fg]-2 = (\mathrm{tr} [f,g]-2)(\mathrm{tr}^2(f)-\mathrm{%
tr} [f,g]-1)^2, \; \; \gamma(f,gfg^{-1}fg)=\gamma(1+\beta-\gamma)^2
\end{equation*}
In another direction, the ergodicity of the polynomial semigroup acting in
the complement of the Riley slice, 
\begin{equation*}
{\mathcal{R}} =\mathrm{int}\{ (\gamma,0,-4): \langle f,g \rangle \; \; %
\mbox{is discrete and freely generated by $f$ and $g$} \}
\end{equation*}
is used to prove the supergroup density and the existence of unbounded
Nielsen classes of generating pairs, \cite{Mril}. Note that this is not the
usual definition of the Riley slice but is known to be equivalent.

In order to further these studies we establish a number of useful results in
this article. Dynamically, the elementary subgroups are sinks for the
polynomial semigroup action on a slice and are isolated within the subset
corresponding to discrete groups (apart from trivial abelian and dihedral
examples). Thus, after we observe elementary properties of a two-generator
discrete group with principal character $(\gamma ,\beta ,-4)$ in $\S 3$, we
list the possible principal characters in Table $1$ representing a
two-generator elementary discrete group with the principal character $\left(
\gamma ,\beta ,-4\right) $ in $\S 4$. Then we prove the two complex
dimensional slice space 
\begin{equation*}
\mathcal{D}_2 = \{(\gamma,\beta): 
\mbox{there is $\tilde{\beta}$ so
$(\gamma,\beta,\tilde{\beta})$ a principal character of a Kleinian group} \}
\end{equation*}
is closed in $\S 5$. Then $\mathcal{D}_2$ is the projection of the three
complex dimensional moduli space $\mathcal{D}$ to $\mathbb{C}^2$.
Applications of this set being closed are in the generalisations of J\o %
rgensen's inequality quantifying the isolated nature of the elementary
groups in the moduli space of discrete groups, \cite{AGM2}, \cite{AGM3}.

\section{Preliminaries}

Let $\mathrm{M\ddot{o}b}^{+}(\overline{\mathbb{C}})$ be the M\"{o}bius group
of the normalized orientation preserving M\"{o}bius transformations on the
Riemann sphere $\overline{\mathbb{C}}:$%
\begin{equation*}
\mathrm{M\ddot{o}b}^{+}(\overline{\mathbb{C}})\mathbb{=}\left \{ \frac{az+b}{%
cz+d}:a,b,c,d\in \mathbb{C} \text{ and}\ ad-bc=1\right \} ,
\end{equation*}
and let $\mathrm{PSL}(2, \mathbb{C} )$\ be the projective special linear
topological quotient group :%
\begin{equation*}
\mathrm{PSL}(2, \mathbb{C} )=\mathrm{SL}(2, \mathbb{C} )/\left \{ \pm
Id\right \} ,
\end{equation*}
where $\mathrm{SL}(2, \mathbb{C} )=\left \{ 
\begin{pmatrix}
a & b \\ 
c & d%
\end{pmatrix}
:a,b,c,d,\in \mathbb{C} \text{ and }ad-bc=1\right \} .$ Then these two
groups are topologically isomorphic, $\mathrm{M\ddot{o}b}^{+}(\overline{%
\mathbb{C}})\cong \mathrm{PSL}(2,\mathbb{C} )$ and so there are at least two
different ways of thinking about groups throughout this paper, as either
subgroups of $\  \mathrm{M\ddot{o}b}^{+}\left( \overline{\mathbb{C}}\right) $
or subgroups of $\  \mathrm{PSL}(2,\mathbb{C}).$ Each of these groups has its
own topology, however the topological isomorphism shows us that the concept
of discreteness is the same.

A subgroup $\Gamma $ of $\mathrm{M\ddot{o}b}^{+}(\overline{\mathbb{C}})$ is
called a \emph{Kleinian group} if it is a non-elementary discrete group (see 
\cite{Beardon,Maskit}). Here a group is \emph{elementary} if it is virtually
abelian, i.e. a finite extension of an abelian group. The elementary groups
are classified \cite[\S 5.1]{Beardon}. A subgroup of $\mathrm{M\ddot{o}b}%
^{+}(\overline{\mathbb{C}})$ is \emph{non-elementary} if the limit set, 
\begin{equation*}
L\left( \Gamma \right) =\text{the set of accumulation points of a generic
orbit } \Gamma \left( z\right) =\{f(z):f\in \Gamma \}
\end{equation*}%
contains infinitely many points. Notice from \cite{Jorg} that a group $%
\Gamma $ is Kleinian if and only if every two-generator subgroup of $\Gamma $
is discrete. It is this characterization of Kleinian groups motivates a
focus on two-generator groups.

The principal character of a representation of the two-generator group $%
\left \langle f,\,g\right \rangle $ conveniently encodes various other
geometric quantities. One of the most important is the \emph{complex
hyperbolic distance} $\delta +i\theta $ between two axes $\mathrm{ax}(f)$
and $\mathrm{ax}(g)$ of two elliptic or loxodromic M\"{o}bius
transformations $f$ and $g,$ where $\mathrm{ax}(f)$ is the hyperbolic line
in ${\mathbb{H}}^{3}$ joining the fixed points of $f$ and $g$ in $\overline{%
\mathbb{C}}$, $\delta $ is the hyperbolic distance between those two axes,
and $\theta $ is the holonomy of the transformation whose axis contains the
common perpendicular between those two axes and moves $\mathrm{ax}(f)$ to $%
\mathrm{ax}(g).$ Thus, 
\begin{equation}
\sinh^2(\delta+i\theta) = \frac{4\gamma}{\beta \tilde{\beta}}
\end{equation}

The parameter $\beta (f)$ describes the conjugacy class of M\"{o}bius
transformation in which $f$ lies.

\begin{lemma}
\label{elem by para}If an element $f$ of $\mathrm{M\ddot{o}b}^{+}(\overline{%
\mathbb{C}})$ is not the identity, then

$\left( a\right) $ $f$ is parabolic if and only if $\beta (f)=0.$

$\left( b\right) $ $f$ is elliptic if and only if $\beta (f)\in [-4,0).$

$\left( c\right) $ $f$ is loxodromic if and only if $\beta (f)\in \mathbb{C}%
\setminus \lbrack 0,4].$
\end{lemma}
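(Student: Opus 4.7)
The plan is to reduce everything to the standard normal forms of Möbius transformations and carry out a direct computation of the trace. The essential inputs are: (i) $\beta(f)=\mathrm{tr}^2(f)-4$ is conjugation invariant, and because it depends only on the \emph{square} of the trace it is well defined on $\mathrm{PSL}(2,\mathbb{C})$ independently of the $\pm$ matrix lift; (ii) every non-identity element of $\mathrm{M\ddot{o}b}^{+}(\overline{\mathbb{C}})$ is conjugate to either the parabolic form $z\mapsto z+1$ or a non-parabolic form $z\mapsto \lambda z$ with $\lambda\in\mathbb{C}\setminus\{0,1\}$; and (iii) in the latter case $f$ is elliptic iff $|\lambda|=1$ and loxodromic iff $|\lambda|\neq 1$.

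First I would prove the forward implications by substitution into the normal forms. The parabolic form lifts to $\left(\begin{smallmatrix}1 & 1\\ 0 & 1\end{smallmatrix}\right)$, which has trace $2$ and so $\beta=0$; this gives one direction of (a). The non-parabolic form lifts to $\mathrm{diag}(\mu,\mu^{-1})$ with $\mu^{2}=\lambda$, giving $\mathrm{tr}^{2}=\lambda+2+\lambda^{-1}$ and hence
\begin{equation*}
\beta=\lambda+\lambda^{-1}-2.
\end{equation*}
If $\lambda=e^{i\theta}$ with $\theta\not\equiv 0$, then $\beta=2\cos\theta-2\in[-4,0)$, which is the forward direction of (b). If instead $|\lambda|\neq 1$, one checks that $\lambda+\lambda^{-1}-2$ lies outside the segment joining $-4$ and $0$, which is the forward direction of (c).

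Second, I would close up the converses. The cleanest way is to note that the three parameter regions in (a), (b), (c) are pairwise disjoint and cover every value $\beta(f)$ takes on a non-identity element, so the forward implications already imply the reverse ones. Concretely, given $\beta=c\neq 0$, the equation $\lambda+\lambda^{-1}=c+2$ has discriminant $c(c+4)$, and its two roots form a reciprocal pair; the roots lie on the unit circle precisely when this discriminant is non-positive, i.e.\ when $c\in[-4,0)$, which matches the elliptic case. Everything outside $[-4,0]$ corresponds to $|\lambda|\neq 1$, the loxodromic case.

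The only real bookkeeping obstacle is making sure the three parameter ranges truly partition $\mathbb{C}$ relative to the $\beta$-image of non-identity elements and that the dichotomy ``parabolic vs.\ diagonal normal form'' corresponds exactly to ``$\beta=0$ vs.\ $\beta\neq 0$''. Once that is in place, the rest is direct computation with no deeper structural input required, so the lemma follows essentially by inspection of the trace on the canonical forms.
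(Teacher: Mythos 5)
The paper offers no proof of this lemma at all --- it is quoted as a standard fact from the Gehring--Martin and Beardon literature --- so there is nothing to compare against; judged on its own, your normal-form computation is the standard argument and is essentially correct. Two remarks. First, note that what you actually prove in part (c) is that $f$ is loxodromic if and only if $\beta(f)\in\mathbb{C}\setminus[-4,0]$; the interval $[0,4]$ printed in the statement is a typo (e.g.\ $f(z)=4z$ is loxodromic with $\beta(f)=\lambda+\lambda^{-1}-2=9/4\in[0,4]$), and your own partition argument --- forward implications into three pairwise disjoint regions exhausting $\mathbb{C}$ force the converses --- only goes through for the corrected interval, since $[-4,0)$ is contained in $\mathbb{C}\setminus[0,4]$. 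Second, your ``concrete'' converse via the discriminant is imprecise: for complex $c$ the condition that $c(c+4)$ be ``non-positive'' is neither meaningful nor equivalent to $c\in[-4,0]$ (for instance $c=-2+i$ gives $c(c+4)=-5<0$, yet the reciprocal roots of $\lambda^2-(c+2)\lambda+1=0$ have moduli $\neq 1$). The correct characterization is that a reciprocal pair $\{\lambda,\lambda^{-1}\}$ lies on the unit circle if and only if $\lambda+\lambda^{-1}=c+2$ is real and lies in $[-2,2]$, i.e.\ $c\in[-4,0]$; this is exactly the statement that the Joukowski map sends the unit circle onto $[-2,2]$ and nothing else onto it, which you already use implicitly in the forward direction. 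Since your primary closing argument (disjointness plus exhaustion) does not rely on the discriminant remark, the proof stands once these two points are tidied up.
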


In addition, the parameter $\beta (f)$ gives the order of each elliptic
element.

\begin{lemma}
\label{beta & order}If an element $f$ of $\mathrm{M\ddot{o}b}^{+}\left( 
\overline{\mathbb{C}}\right) $ is not the identity, then $f$ is elliptic of
order $p$ if and only if 
\begin{equation*}
\beta \left( f\right) \ =-4\sin ^{2}(\frac{k\pi }{p}),\text{ \ for }1\leq k<p%
\text{\ and }(k,p)=1.
\end{equation*}
\end{lemma}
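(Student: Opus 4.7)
The plan is to reduce to the canonical form of an elliptic element and then read off both $\beta(f)$ and the order condition directly.

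First I would use the fact that every elliptic element of $\mathrm{M\ddot{o}b}^{+}(\overline{\mathbb{C}})$ is conjugate to a rotation $z\mapsto e^{2i\theta}z$ for some $\theta\in\mathbb{R}$, corresponding in $\mathrm{SL}(2,\mathbb{C})$ to the diagonal matrix with entries $e^{i\theta}$ and $e^{-i\theta}$ (up to the overall sign in $\{\pm Id\}$). Since $\beta$ is defined via $\mathrm{tr}^2$, it is a conjugation invariant and well defined on $\mathrm{PSL}(2,\mathbb{C})$, so I may compute it on the canonical form: $\mathrm{tr}(f)=\pm 2\cos\theta$, hence
\begin{equation*}
\beta(f)=\mathrm{tr}^{2}(f)-4=4\cos^{2}\theta-4=-4\sin^{2}\theta.
\end{equation*}

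Next I would translate the order condition. The element $f$, acting as $z\mapsto e^{2i\theta}z$, satisfies $f^{p}=\mathrm{id}$ in $\mathrm{M\ddot{o}b}^{+}(\overline{\mathbb{C}})$ precisely when $e^{2ip\theta}=1$, i.e. $\theta=k\pi/p$ for some integer $k$; and the order is exactly $p$ if and only if $e^{2i\theta}$ is a primitive $p$-th root of unity, which is exactly the condition $(k,p)=1$. Taking a representative in the range $1\le k<p$ and substituting into the formula above gives $\beta(f)=-4\sin^{2}(k\pi/p)$, establishing the forward implication.

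For the converse, if $\beta(f)=-4\sin^{2}(k\pi/p)$ with $(k,p)=1$ and $1\le k<p$, then by Lemma \ref{elem by para} we have $\beta(f)\in[-4,0)$, so $f$ is elliptic and hence conjugate to some $z\mapsto e^{2i\theta}z$ with $-4\sin^{2}\theta=-4\sin^{2}(k\pi/p)$. Therefore $\theta\equiv \pm k\pi/p\pmod{\pi}$, and in either case $e^{2i\theta}$ is a primitive $p$-th root of unity, so $f$ has order $p$. There is no real obstacle here; the only point that needs a little care is the sign/representative ambiguity when passing between $\mathrm{SL}(2,\mathbb{C})$ and $\mathrm{PSL}(2,\mathbb{C})$, which is absorbed by the fact that $\beta$ depends on $\mathrm{tr}^{2}$ and the order depends only on the image in $\mathrm{PSL}(2,\mathbb{C})$, together with the symmetry $\sin^{2}(k\pi/p)=\sin^{2}((p-k)\pi/p)$ that accounts for the two choices of sign of $\theta$.
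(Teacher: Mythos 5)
Your proof is correct; the paper in fact states this lemma without proof, treating it as a standard fact, and your argument via the canonical diagonal form $\mathrm{diag}(e^{i\theta},e^{-i\theta})$, giving $\beta(f)=-4\sin^{2}\theta$ and matching order $p$ with primitivity of the root of unity $e^{2i\theta}$, is precisely the standard derivation one would supply. The converse is also handled properly: for $1\le k<p$ one has $\sin^{2}(k\pi/p)\neq 0$, so $\beta(f)\in[-4,0)$ and Lemma \ref{elem by para} applies, and the sign ambiguity $\theta\equiv\pm k\pi/p\pmod{\pi}$ is harmless since $(p-k,p)=(k,p)=1$.
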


We use the following easy lemma: a group is Kleinian if and only if it is 
\emph{virtually Kleinian, } i.e. has a Kleinian subgroup of finite index.

\begin{lemma}
\label{virtually Kleinian}Let $H$ be a finite index subgroup of $\Gamma $.
Then $\Gamma $ is Kleinian if and only if $H$ is Kleinian.
\end{lemma}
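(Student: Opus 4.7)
The plan is to unwrap the definition of ``Kleinian'' as ``discrete and non-elementary,'' and verify each of the two properties is preserved in both directions under passage to a finite index subgroup. Throughout, write $n=[\Gamma:H]<\infty$.

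First I would handle discreteness. One direction is immediate: if $\Gamma$ is discrete, then every subgroup, including $H$, is discrete in the induced topology. For the converse, suppose $H$ is discrete and choose coset representatives $\gamma_1,\dots,\gamma_n$ with $\Gamma=\bigcup_{i=1}^n \gamma_i H$. In the topological group $\mathrm{M\ddot{o}b}^{+}(\overline{\mathbb{C}})\cong\mathrm{PSL}(2,\mathbb{C})$, each coset $\gamma_i H$ is discrete (being the homeomorphic image of $H$ under left translation), and a finite union of discrete sets in a Hausdorff topological group is discrete. Hence $\Gamma$ is discrete.

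Next I would handle the non-elementary property, equivalently showing that $H$ is virtually abelian if and only if $\Gamma$ is. If $\Gamma$ is virtually abelian, let $A\lhd\Gamma$ be an abelian subgroup with $[\Gamma:A]<\infty$; then $A\cap H$ is abelian and $[H:A\cap H]\le [\Gamma:A]<\infty$, so $H$ is virtually abelian. Conversely, suppose $H$ has an abelian subgroup $A$ with $[H:A]<\infty$. Then
\begin{equation*}
[\Gamma:A]=[\Gamma:H]\cdot[H:A]<\infty,
\end{equation*}
so $A$ is an abelian subgroup of finite index in $\Gamma$ and $\Gamma$ is virtually abelian. Combining the two equivalences gives the lemma.

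There is essentially no obstacle here; the only point that warrants a moment's thought is the converse direction of discreteness, and the only subtle aspect of the elementary/non-elementary equivalence is the elementary observation that finite index is transitive. Alternatively, one may prefer the limit set characterization: for a finite index subgroup one has $L(H)=L(\Gamma)$ (each $\Gamma$-orbit is a finite union of $H$-orbits, hence has the same accumulation points), so ``$L(\Gamma)$ is infinite'' is equivalent to ``$L(H)$ is infinite,'' giving an alternative route to the non-elementary part.
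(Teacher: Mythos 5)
Your overall structure --- splitting ``Kleinian'' into ``discrete'' plus ``non-elementary'' and passing both properties through the coset decomposition $\Gamma=\gamma_1H\cup\cdots\cup\gamma_nH$ --- is the same as the paper's, and your treatment of the non-elementary half (virtual abelianness descends via $A\cap H$ and lifts via transitivity of finite index, or equivalently $L(H)=L(\Gamma)$) is correct and in fact more explicit than the paper, which only sketches the discreteness half. However, one step in your discreteness argument is false as stated: a finite union of discrete subsets of a Hausdorff topological group need not be discrete. In $(\mathbb{R},+)$ the sets $\{0\}$ and $\{1/n:n\geq 1\}$ are each discrete in the subspace topology, yet their union is not discrete at $0$. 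So you cannot pass directly from ``each coset $\gamma_iH$ is discrete'' to ``the union is discrete.''

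The repair is short. A discrete \emph{subgroup} of a Hausdorff topological group is automatically closed: choose a neighbourhood $V$ of the identity with $V^{-1}V\cap H=\{Id\}$; then any translate $gV$ meets $H$ in at most one point, so every point of $\overline{H}$ already lies in $H$. Hence each coset $\gamma_iH$ is a \emph{closed} discrete subset of $\mathrm{PSL}(2,\mathbb{C})$, and a finite union of closed discrete sets is discrete, which gives the discreteness of $\Gamma$. Alternatively, argue with sequences as the paper's two-line proof does: if $\Gamma$ were not discrete there would be distinct $g_k\in\Gamma$ with $g_k\to Id$; passing to a subsequence lying in a single coset $\gamma_iH$ and writing $h_k=\gamma_i^{-1}g_k\in H$, the elements $h_kh_{k+1}^{-1}=\gamma_i^{-1}g_kg_{k+1}^{-1}\gamma_i$ are non-identity elements of $H$ converging to the identity, contradicting the discreteness of $H$. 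With either patch your proof is complete, and it supplies the non-elementary half that the paper leaves implicit.
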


The proof follows directly from the fact that if $H$ has finite index in $%
\Gamma ,$ then it has a coset decomposition 
\begin{equation*}
\Gamma =\phi _{1}H\cup \phi _{2}H\cup \cdots \cup \phi _{n}H,
\end{equation*}%
where $\phi _{1}=Id,\phi _{2,}\cdots ,\phi _{n}\in \Gamma $ for some $n.$
Thus any sequence $\left \{ g_{n}\right \}$ contains a subsequence $\left \{
g_{n_{k}}\right \} $ lying in one coset.

\section{Slicing spaces of two-generator discrete groups}

Suppose that $\left \langle f,\,g\right \rangle $ is two-generator discrete\
subgroup of $\mathrm{M\ddot{o}b}^{+}(\overline{\mathbb{C}})$ with $\gamma
=\gamma \left( f,\,g\right) \neq 0$ and $\gamma \left( f,\,g\right) =\gamma
\neq \beta =\beta \left( f\right) ,$ Then Gehring and Martin \cite[Lemma 2.29%
]{GehMar} states that there exist elliptics $\phi $ and $\psi $ of order $2$
such that $\Gamma _{\phi }=\langle f,\phi \rangle $ and\ $\Gamma _{\psi
}=\langle f,\psi \rangle $ are discrete with the related two-generator
groups $\Gamma _{\phi }=\langle f,\phi \rangle $ and\ $\Gamma _{\psi
}=\langle f,\psi \rangle $ are discrete with principal characters $(\gamma
,\beta ,-4)$ and $(\beta -\gamma ,\beta ,-4)$, respectively.

Notice that $\gamma(f,g)=0$ implies $f$ and $g$ have a common fixed point on 
$\overline{\mathbb{C}}$ and hence is elementary (see \cite[Theorem 4.3.5]%
{Beardon} and \cite[Identity (1.5)]{GehMar}). Thus, the condition $\gamma
\neq 0$ for Kleinian groups. We now consider the other restriction $\gamma
\neq \beta $ of \cite[Lemma 2.29]{GehMar}.

\medskip

We start with our observation of the properties of a two-generator discrete
group with the principal character $\left( \gamma ,\beta ,-4\right) ,$ which
has one generator of order $2.$

\begin{lemma}
\label{gammabeta}Let $\Gamma =\left \langle f,\,g\right \rangle $ be a
discrete group with principal character $\left( \gamma ,\beta ,-4\right) .$
If $\gamma =\beta ,$ then $\Gamma $ is an elementary discrete group, which
is isomorphic to a dihedral group $D_{p},$ for some $p$ $\in \left \{
1,2,\cdots ,\infty \right \} $.
\end{lemma}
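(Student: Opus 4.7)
The plan is to apply a Fricke-style trace identity to translate the hypothesis $\gamma=\beta$ into the statement that $fg$ is an involution, after which $\Gamma$ becomes a group generated by two elements of order $2$ and the dihedral structure is immediate.

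First I would note that $\beta(g)=-4$ forces $\mathrm{tr}(g)=0$, so $g$ has order $2$ in $\mathrm{PSL}(2,\mathbb{C})$. The standard Fricke identity in $\mathrm{SL}(2,\mathbb{C})$,
\[
\mathrm{tr}[f,g] = \mathrm{tr}^2(f) + \mathrm{tr}^2(g) + \mathrm{tr}^2(fg) - \mathrm{tr}(f)\,\mathrm{tr}(g)\,\mathrm{tr}(fg) - 2,
\]
collapses under $\mathrm{tr}(g)=0$ to $\gamma(f,g) = \beta(f) + \mathrm{tr}^2(fg)$. Hence the assumption $\gamma=\beta$ is equivalent to $\mathrm{tr}(fg)=0$, and by Lemma \ref{elem by para} this forces $fg$ to be either the identity or elliptic of order $2$.

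Setting $h:=fg$, we have $\Gamma = \langle f,g\rangle = \langle h,g\rangle$, a group generated by two involutions (if $h=Id$ the group is just cyclic of order $2$, which we identify with $D_1$). Any group so generated is a quotient of the infinite dihedral group $\langle a,b\mid a^2=b^2=1\rangle$ and therefore admits a presentation
\[
\langle h,g\mid h^2 = g^2 = (hg)^p = 1\rangle
\]
for some $p\in\{1,2,\ldots,\infty\}$. The cyclic subgroup $\langle hg\rangle$ has index $2$ in $\Gamma$, so $\Gamma$ is virtually cyclic, hence virtually abelian, hence elementary. Since $hg = fg\cdot g = f$ in $\mathrm{PSL}(2,\mathbb{C})$ (using $g^2 = Id$), the integer $p$ equals the order of $f$, and discreteness guarantees this lies in $\{1,2,\ldots,\infty\}$, giving $\Gamma \cong D_p$ as claimed.

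There is no serious analytic obstacle; the argument is essentially algebraic. The only point requiring a little care is the $\pm I$ ambiguity between $\mathrm{SL}(2,\mathbb{C})$ and $\mathrm{PSL}(2,\mathbb{C})$ when converting the trace equation $\mathrm{tr}(fg)=0$ into the group-theoretic statement that $fg$ has order $2$, but this is handled routinely via Lemmas \ref{elem by para} and \ref{beta & order}.
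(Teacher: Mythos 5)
Your proof is correct, but it takes a genuinely different route from the paper's. The paper argues by explicit matrix normalization: it splits into the cases $f$ parabolic (where $\gamma=\beta=0$ and the group is conjugate into the upper triangular matrices) and $f$ diagonalizable, computes the commutator to get $\gamma=-\mu\delta(\lambda-\tfrac{1}{\lambda})^{2}$ and $\beta=(\lambda-\tfrac{1}{\lambda})^{2}$, deduces from $\gamma=\beta$ together with $\mathrm{tr}(g)=0$ that $g$ is the involution $z\mapsto-\mu^{2}/z$ interchanging the fixed points of $f$, and concludes $gfg^{-1}=f^{-1}$, invoking the classification of elementary discrete groups. You instead use the Fricke identity --- which is exactly identity (\ref{Friche beta Id}) of the paper, used there in the opposite direction in the proof of Lemma \ref{two order 2} --- to convert $\gamma=\beta$ into $\mathrm{tr}(fg)=0$, so that $fg$ is an involution, and then appeal to the purely group-theoretic fact that a group generated by two involutions is a quotient of $D_{\infty}$, hence dihedral of order twice the order of $f$. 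Your argument is shorter, coordinate-free, and handles both the parabolic and non-parabolic cases uniformly; the paper's computation has the compensating virtue of producing the explicit matrices and the relation $gfg^{-1}=f^{-1}$ that it reuses later. Two trivial remarks: the case $fg=Id$ that you allow for cannot actually occur, since the identity has trace $\pm 2\neq 0$; and the index of $\langle f\rangle$ in $\Gamma$ is a priori at most $2$ rather than exactly $2$, though either way the group is virtually cyclic and hence elementary, so nothing is lost.
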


\begin{proof}
There are two cases to consider. If $f$ is parabolic, then $\gamma=\beta =0$
by Lemma \ref{elem by para} and $\Gamma $ as $f$ and $g$ have a common fixed
point which we may assume by conjugacy is $\infty \in \overline{\mathbb{C}}$%
. Then $\Gamma$ is a subgroup of the upper-triangular matrices 
\begin{equation*}
f=\left( 
\begin{array}{cc}
1 & x \\ 
0 & 1%
\end{array}%
\right) \text{ and } g=\left( 
\begin{array}{cc}
\pm i & y \\ 
0 & \mp i%
\end{array}
\right)
\end{equation*}
and the result follows. We may now assume $f$ and $g$ have the form 
\begin{equation*}
f=\left( 
\begin{array}{cc}
\lambda & 0 \\ 
0 & \frac{1}{\lambda }%
\end{array}%
\right) \text{ and }g=\left( 
\begin{array}{cc}
\alpha & \mu \\ 
\delta & \nu%
\end{array}%
\right) ,\text{ where }\lambda \neq 0,\pm 1,\alpha \nu -\mu \delta =1.
\end{equation*}%
Computing the commutator,%
\begin{equation*}
\left[ f,g\right] =\left( 
\begin{array}{cc}
\alpha \nu -\lambda ^{2}\mu \delta & \alpha \lambda ^{2}\mu -\alpha \mu \\ 
\frac{1}{\lambda ^{2}}\delta \nu -\delta \nu & \alpha \nu -\frac{1}{\lambda
^{2}}\mu \delta%
\end{array}%
\right) ,
\end{equation*}%
and hence $\gamma =\mathrm{tr}\left[ f,g\right] -2=-\mu \delta (\lambda -%
\frac{1}{\lambda })^{2},$ and $\beta =\mathrm{tr}^{2}(f)-4=(\lambda -\frac{1%
}{\lambda })^{2}$. The assumption $\gamma =\beta $ now implies that $\mu
\delta =-1$. Since $\alpha \nu -\mu \delta =1,$$\alpha \nu =0 $. Now $\beta
(g)=-4,$ so $\mathrm{tr}(g)=0,$ which gives $\alpha +\nu =0$. Then $\alpha
=\nu =0$ and 
\begin{equation*}
g=\left( 
\begin{array}{cc}
0 & \mu \\ 
-\frac{1}{\mu } & 0%
\end{array}%
\right) .
\end{equation*}%
Thus, $g(z)=-\frac{\mu ^{2}}{z}$ interchanges the fixed points $0$ and $%
\infty $ of $f.$ So $\Gamma $ has a finite orbit $\left \{ 0,\infty
\right
\} $\ and hence it is an elementary discrete group, \cite{Beardon}.
Computation shows the conjugate $gfg^{-1}=f^{-1}:$ 
\begin{equation*}
gfg^{-1}=\left( 
\begin{array}{cc}
\frac{1}{\lambda } & 0 \\ 
0 & \lambda%
\end{array}%
\right) =f^{-1}.
\end{equation*}%
Thus $\Gamma $ is a dihedral group $D_{p}$, for some $p \in \mathbb{N}\cup
\{ \infty \}$ by from Theorem \ref{classific of disc elem}.
\end{proof}

\begin{lemma}
\label{two order 2}Let $\Gamma =\left \langle f,\,g\right \rangle $ be a
discrete group with the principal character $\left( \gamma ,-4,-4\right) ,$
then $\Gamma $ is isomorphic to one of the following elementary discrete
groups:

$(a)$ The dihedral group $D_{\infty },$ if $\mathrm{ax}(f)\cap \mathrm{ax}%
(g)=\varnothing $ or $\mathrm{ax}(f)\cap \mathrm{ax}(g)\neq \varnothing $ in 
$%
\mathbb{C}
.$

$(b)$ A dihedral group $D_{p}$ for some $p\in 
\mathbb{N}
,$ if $\mathrm{ax}(f)\cap \mathrm{ax}(g)\neq \varnothing $ in ${\mathbb{H}}%
^{3}.$

$(c)$ A parabolic group, if $\mathrm{ax}(f)\cap \mathrm{ax}(g)=\left \{
\infty \right \} .$
\end{lemma}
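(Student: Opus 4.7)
The plan is to use the fact that $\beta(f)=\beta(g)=-4$ forces both $f$ and $g$ to be elliptic of order $2$ by Lemma \ref{beta & order}, i.e.\ half-turns about their hyperbolic axes. Choosing $SL(2,\mathbb{C})$ representatives with trace zero, Cayley--Hamilton gives $f^{2}=g^{2}=-I$, hence $f^{-1}=-f$ and $g^{-1}=-g$. A short computation then yields
\[
[f,g]=fgf^{-1}g^{-1}=(fg)^{2},\qquad f(fg)f^{-1}=-f^{2}gf=gf=(fg)^{-1},
\]
so $f$ inverts $fg$ by conjugation. Since $g=f\cdot(fg)$, we have $\langle f,g\rangle=\langle f,fg\rangle=\langle fg\rangle\rtimes\langle f\rangle$, a dihedral group whose order is twice the order of $fg$. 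In particular the relation $\gamma=\mathrm{tr}^{2}(fg)-4=\beta(fg)$ reads the conjugacy type of $fg$ directly off $\gamma$ via Lemma \ref{elem by para}.

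The classification of $\langle f,g\rangle$ is thus reduced to matching the type of $fg$ with the geometric configuration of $\mathrm{ax}(f)$ and $\mathrm{ax}(g)$. The key (and least routine) step is the classical identification for compositions of half-turns: $fg$ is elliptic with axis the common perpendicular through the intersection point exactly when the axes meet in $\mathbb{H}^{3}$; $fg$ is parabolic with ideal fixed point $\zeta$ exactly when the closures of the axes in $\overline{\mathbb{H}^{3}}$ share only the single boundary point $\zeta\in\overline{\mathbb{C}}$; and $fg$ is loxodromic with axis the common perpendicular exactly when the closures are disjoint. This can be verified either from the standard hyperbolic geometry of half-turns, or by normalizing $\mathrm{ax}(f)$ as the vertical geodesic above $0$ and computing $\mathrm{tr}(fg)$ from an explicit matrix form for $g$.

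Assembling the pieces yields the three cases. In (b), $fg$ is elliptic and lies in the discrete group $\Gamma$, so has finite order $p\in\mathbb{N}$, giving $\langle f,g\rangle\cong D_{p}$. In the skew subcase of (a), $fg$ is loxodromic of infinite order, so $\langle f,g\rangle\cong D_{\infty}$; the remaining subcase of (a), with axis closures meeting at a point of $\mathbb{C}$, and case (c) both correspond to $fg$ parabolic of infinite order, again giving $D_{\infty}$, but case (c) is singled out because in the upper-half-space normalization with the shared ideal point at $\infty$ the element $fg$ is a Euclidean translation and the group takes the standard parabolic (Euclidean dihedral) form. The principal expected hurdle is the axis-to-type identification for the product of two half-turns; everything else is a brief algebraic manipulation with the trace-zero lift, together with the observation that discreteness is needed only in case (b) to force $fg$ to have finite order.
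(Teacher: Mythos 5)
Your proof is correct, but it takes a genuinely different route from the paper's. The paper argues geometrically, case by case: for disjoint axes it identifies the common perpendicular as $\mathrm{ax}(fg)$, invokes the Fricke identity to obtain $\gamma(f,fg)=\beta(fg)$, and then appeals to Lemma \ref{gammabeta} to conclude the group is dihedral; for axes meeting in ${\mathbb{H}}^3$ it argues that discreteness forces a rational angle and that $gfg^{-1}=f^{-1}$; for axes meeting on $\overline{\mathbb{C}}$ it normalizes the common fixed point to $\infty$ and observes that the product of two half-turns is a Euclidean translation. You instead establish the dihedral structure once and uniformly by pure algebra: the trace-zero lifts satisfy $f^{2}=g^{2}=-I$, whence $[f,g]=(fg)^{2}$ and $f(fg)f^{-1}=(fg)^{-1}$, so $\Gamma=\langle fg\rangle\rtimes\langle f\rangle$ and $\gamma=\beta(fg)$ falls out of $\mathrm{tr}(A^{2})=\mathrm{tr}^{2}(A)-2$. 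This reduces the whole lemma to the single classical fact relating the type of a product of two half-turns to the configuration of their axes (elliptic, parabolic, or loxodromic according as the closed axes meet in ${\mathbb{H}}^3$, share one ideal point, or are disjoint) --- a fact the paper in effect also proves, but spread over its three cases. Your version is cleaner, avoids both the Fricke identity and the appeal to Lemma \ref{gammabeta}, and yields the dihedral presentation directly; its only cost is that the axis-to-type identification is asserted rather than computed, though your indicated verification (normalize $\mathrm{ax}(f)$ and compute $\mathrm{tr}(fg)$) is routine and the paper's own geometric assertions are at a comparable level of detail. You are also right that discreteness enters only in case (b), to force the elliptic $fg$ to have finite order, and your observation that the second subcase of (a) and case (c) are conjugate situations (both with $fg$ parabolic, both abstractly $D_{\infty}$) correctly explains the slightly artificial split in the statement.
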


\begin{proof}
$(1)$ $\mathrm{ax}(f)\cap $ $\mathrm{ax}(g)=\varnothing :$ Let $\alpha $ be
the common perpendicular between two axes $\mathrm{ax}(f)$ and $\mathrm{ax}%
(g).$ Since $f$ and $g$ are elliptic of order $2,$ they are the rotations of
order $2$ about their axes $\mathrm{ax}(f)$ and $\mathrm{ax}(g)$ in ${%
\mathbb{H}}^{3},$ respectively. And each of $f$ and $g$ interchanges the
ending points of $\alpha $ and fixes $\alpha $ setwise, hence $\Gamma $ is
elementary. Furthermore, the product $fg$ fixes the ending points of $\alpha 
$ and fixes $\alpha $ setwise, so the axis $\mathrm{ax}\left( fg\right)
=\alpha .$ It follows that $fg$ is loxodromic.

Next, from the Fricke identity (see \cite{Fricke-Klein}), we have 
\begin{equation}
\gamma \left( f,\,g\right) =\beta \left( f\right) +\beta (g)+\beta (fg)-%
\mathrm{tr}(f)\mathrm{tr}(g)\mathrm{tr}(fg)+8.  \label{Friche beta Id}
\end{equation}

Since $\beta \left( f\right) =\beta \left( g\right) =-4,$ $\mathrm{tr}(f)=0,$
and $f^{2}=Id,$ the previous identity (\ref{Friche beta Id}) gives $\gamma
(f,fg)=\beta (fg). $ and by Lemma \ref{gammabeta}, $\langle f,fg\rangle $ is
a dihedral group. Notice that $fg$ is loxodromic with infinite order, hence $%
\left \langle f,\,g\right \rangle =\langle f,fg\rangle \cong D_{\infty }.$

$(2)$ $\mathrm{ax}(f)\cap $ $\mathrm{ax}(g)\neq \varnothing $ in ${\mathbb{H}%
}^{3}:$ Since $f$ and $g$ have a common fixed point (say $x_{0}$), $\Gamma $
is elementary. Since $\Gamma $ is discrete , the axes of elliptics both of
order $2$ must intersect at $x_{0}$ an angle $\theta =\frac{k\pi }{n}$ for
some $k$ and $n\geq 2,$ the product $fg\ $is a rotation about $\mathrm{ax}%
(fg)$ with the oriented angle $2\theta =\frac{2k\pi }{p}$ for any $n\geq 2$
and some $p\in \mathbb{N} ,$ where $\mathrm{ax}(fg)$ is perpendicular to $%
\mathrm{ax}(f)$ and $\mathrm{ax}(g)$ and passing through $x_{0}.$ Therefore, 
$fg\ $is elliptic of order $p.$ Notice that $f$ and $g$ are rotations of
order $2$ and their rotation axes $\mathrm{ax}(f)$ and $\mathrm{ax}(g)$ meet
in $\overline{{\mathbb{H}}^{3}},$ we have $gfg^{-1}=f^{-1}.$ It follows that 
$\left \langle f,\,g\right \rangle $ $\cong D_{p}.$

$(3)$ $\mathrm{ax}(f)\cap $ $\mathrm{ax}(g)\neq \varnothing $ in $\overline{%
\mathbb{C}}:$ We may assume by conjugacy that $\infty$ is the common fixed
point of $f$ and $g$ in $\overline{\mathbb{C}} $. Then $f$ and $g$ fix $%
\infty ,$ and hence $\mathrm{ax}(f)$ and $\mathrm{ax}(g)$ are vertical
hyperbolic lines. Notice that both $f$ and $g$ are rotations with the angles 
$\pi $ at distinct centers. Since the sum of two rotation angles is $2\pi ,$
the product$\ fg$ is a translation and hence $fg$ is parabolic fixing $%
\infty .$ Thus the classification of elementary groups \cite[Section 5.1]%
{Beardon}, $\left \langle f,\,g\right \rangle $ is a group of Euclidean
similarities of $\mathbb{C}$.
\end{proof}

We next note the following elementary result.

\begin{lemma}
\label{index 2}Let $\Gamma =\left \langle f,\,g\right \rangle $ be a
two-generator group with principal character $\left( \gamma ,\beta
,-4\right) ,$ then $\Gamma ^{g}=\left \langle f,\text{\thinspace }%
gfg^{-1}\right \rangle $ is an index two subgroup of $\left \langle
f,\,g\right \rangle :$ $\Gamma =\Gamma ^{g}\cup g\Gamma ^{g}$.
\end{lemma}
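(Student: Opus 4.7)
The plan is to exploit the fact that $\beta(g)=-4$ forces $g$ to be elliptic of order $2$ (Lemma~\ref{beta & order} applied with $p=2$, $k=1$), so that $g^{2}=\mathrm{Id}$ in $\mathrm{M\ddot{o}b}^{+}(\overline{\mathbb{C}})$. The entire argument will then hinge on this single relation, together with the obvious fact that $f\in\Gamma^{g}$.

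First I would verify that $\Gamma^{g}$ is normalised by $g$. Its two generators are $f$ and $gfg^{-1}$; conjugating by $g$ sends $f$ to $gfg^{-1}\in\Gamma^{g}$, and sends $gfg^{-1}$ to $g^{2}fg^{-2}=f\in\Gamma^{g}$, where the equality $g^{2}=\mathrm{Id}$ is used. Hence $g\Gamma^{g}g^{-1}\subseteq\Gamma^{g}$, and symmetry gives equality. Since $f\in\Gamma^{g}$ normalises $\Gamma^{g}$ trivially, and $\Gamma=\langle f,g\rangle$, we conclude that $\Gamma^{g}$ is a normal subgroup of $\Gamma$.

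Second, I would examine the quotient $\Gamma/\Gamma^{g}$. It is generated by the images of $f$ and $g$; the image of $f$ is trivial, and the image of $g$ satisfies $\overline{g}^{2}=\overline{\mathrm{Id}}$. Therefore $\Gamma/\Gamma^{g}$ is cyclic of order at most $2$, which immediately yields the coset decomposition
\begin{equation*}
\Gamma = \Gamma^{g}\cup g\Gamma^{g}.
\end{equation*}

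There is no substantive obstacle here; the only point worth flagging is that the statement is vacuous (with $\Gamma=\Gamma^{g}$) in the degenerate case $g\in\Gamma^{g}$, while the displayed identity $\Gamma=\Gamma^{g}\cup g\Gamma^{g}$ remains correct in every case. The proof is essentially a one-line consequence of $g^{2}=\mathrm{Id}$ and the observation that conjugation by $g$ permutes the two chosen generators of $\Gamma^{g}$.
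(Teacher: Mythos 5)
Your proof is correct; the paper states this lemma without proof as an ``elementary result,'' and your argument --- deducing $g^{2}=\mathrm{Id}$ from $\beta(g)=-4$ via Lemma~\ref{beta & order}, checking that conjugation by $g$ permutes the generators $f$ and $gfg^{-1}$ so that $\Gamma^{g}$ is normal, and observing that the quotient is generated by $\overline{g}$ with $\overline{g}^{2}$ trivial --- is exactly the intended one. Your caveat that the index can degenerate to one (so that the displayed decomposition $\Gamma=\Gamma^{g}\cup g\Gamma^{g}$, rather than the literal phrase ``index two,'' is what holds in every case) is a fair and accurate observation.
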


Then we have the following.

\begin{lemma}
\label{fixedpoints}Suppose that $\left \langle f,\,g\right \rangle $ is a
two-generator Kleinian group with principal character $(\gamma ,\beta ,%
\widetilde{\beta }).$ If $\widetilde{\beta }\neq -4,$ then%
\begin{equation*}
\mathrm{Fix}(f)\cap \mathrm{Fix}\left( gfg^{-1}\right) =\varnothing .
\end{equation*}
\end{lemma}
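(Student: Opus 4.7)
My plan is to argue by contradiction, combining the trace identity $\gamma(f, gfg^{-1}) = \gamma(\gamma - \beta)$ derived in the introduction with the standard characterisation (see \cite[Thm~4.3.5]{Beardon}) that two non-identity M\"obius transformations share a fixed point on $\overline{\mathbb{C}}$ if and only if the corresponding $\gamma$-parameter vanishes.

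Assume $p \in \mathrm{Fix}(f) \cap \mathrm{Fix}(gfg^{-1})$. The standard fact gives $\gamma(f, gfg^{-1}) = 0$, and the trace identity then forces $\gamma(\gamma - \beta) = 0$. Since $\Gamma = \langle f, g\rangle$ is Kleinian, $\gamma \neq 0$ (otherwise $f$ and $g$ would share a fixed point and $\Gamma$ would be elementary), and so $\gamma = \beta$.

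At this point I would invoke the Gehring--Martin reduction \cite[Lemma~2.29]{GehMar}, in the form waived for $\gamma = \beta$ (Lemma~\ref{2pars}), to produce an order-$2$ element $\phi$ for which $\Gamma_\phi = \langle f, \phi\rangle$ is discrete with principal character $(\beta, \beta, -4)$ and $\langle f, \phi f \phi^{-1}\rangle = \langle f, gfg^{-1}\rangle$. The hypothesis $\tilde\beta \neq -4$ ensures that $g$ is not already such an involution, so this passage is a genuine dimension reduction rather than a trivial identification. Lemma~\ref{gammabeta} applied to $\Gamma_\phi$, whose principal character satisfies the equality $\gamma = \beta$, identifies $\Gamma_\phi$ as a dihedral group $D_p$, in which the reflection $\phi$ satisfies $\phi f \phi^{-1} = f^{-1}$. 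Therefore $\langle f, gfg^{-1}\rangle = \langle f, \phi f \phi^{-1}\rangle = \langle f\rangle$, so $gfg^{-1} \in \langle f\rangle$. Since $gfg^{-1}$ has the same trace as $f$, a short calculation forces $gfg^{-1} = f^{\pm 1}$; hence $g$ normalises $\langle f\rangle$ and preserves the (at most two-point) set $\mathrm{Fix}(f)$ setwise. As $f$ fixes this set pointwise, $\mathrm{Fix}(f)$ is a finite invariant set for $\Gamma$, forcing $\Gamma$ to be elementary and contradicting the Kleinian hypothesis.

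The main obstacle I expect is the correct deployment of the waived Gehring--Martin reduction in the $\gamma = \beta$ case: one must cite Lemma~\ref{2pars} rather than the unwaived form and verify that the discreteness of $\Gamma$ transfers to $\Gamma_\phi$, so that Lemma~\ref{gammabeta} can indeed force the dihedral structure. Once that is in place the remaining steps, a short trace calculation and the elementary characterisation of groups preserving a finite set on $\overline{\mathbb{C}}$, are bookkeeping.
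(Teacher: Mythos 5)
Your reduction to $\gamma=\beta$ is sound: sharing a fixed point forces $\gamma(f,gfg^{-1})=\gamma(\gamma-\beta)=0$, and $\gamma\neq 0$ for a Kleinian group. The trouble is the next step. You invoke Lemma \ref{2pars} to produce an involution $\phi$ with $\langle f,\phi\rangle$ discrete of character $(\beta,\beta,-4)$ and $\langle f,\phi f\phi^{-1}\rangle=\langle f,gfg^{-1}\rangle$. That lemma is stated \emph{after} the one you are proving, and its proof rests on \cite[Remark 2.6]{GM00}, where $\phi$ and $\psi$ are half-turns constructed from the common perpendicular between $\mathrm{ax}(f)$ and $\mathrm{ax}(gfg^{-1})$; the construction presupposes exactly what you are trying to establish, namely that these axes are in general position. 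In the degenerate configuration you must exclude --- say $f$ elliptic with $\mathrm{Fix}(f)=\{0,\infty\}$ and $g(0)=\infty$ but $g(\infty)=w\notin\{0,\infty\}$, so the two axes share the single endpoint $\infty$ --- no involution $\phi$ can satisfy $\phi f\phi^{-1}=gfg^{-1}$ at all, since an involution carrying $0$ to $\infty$ must carry $\infty$ back to $0$, whereas conjugating $f$ to $gfg^{-1}$ requires $\infty\mapsto w$. So appealing to Lemma \ref{2pars} to rule out this configuration is circular at precisely the delicate point. A symptom of the trouble is that your final contradiction (``$\Gamma$ preserves the finite set $\mathrm{Fix}(f)$'') never actually uses the hypothesis $\widetilde{\beta}\neq -4$, which the statement plainly needs somewhere.

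The paper avoids all of this with a direct count. Since $\mathrm{Fix}(gfg^{-1})=g(\mathrm{Fix}(f))$ and neither fixed point of $f$ is fixed by $g$ (else $\gamma=0$), a nonempty intersection forces, via \cite[Theorem 5.1.2]{Beardon}, that $f$ and $gfg^{-1}$ cannot share exactly one fixed point, hence $\mathrm{Fix}(f)=g(\mathrm{Fix}(f))$ with $g$ interchanging the two fixed points of $f$. Then $g^{2}$ fixes those two points as well as the fixed points of $g$ itself, so $g^{2}$ has at least three fixed points, $g^{2}=\mathrm{id}$, and $\widetilde{\beta}=-4$ --- the desired contradiction. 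If you want to salvage your route, you need an independent argument (essentially the paper's) that the axes of $f$ and $gfg^{-1}$ cannot share exactly one endpoint before any appeal to the involution machinery of Lemma \ref{2pars} and Lemma \ref{gammabeta}.
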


\begin{proof}
Since $\left \langle f,\,g\right \rangle $ is a two-generator Kleinian
group, $\gamma \neq 0$ and hence $f$ and $g$ cannot share any fixed points,
so $\mathrm{Fix}(f)\cap \mathrm{Fix}(g)=\varnothing .$ Since $g(\mathrm{Fix}%
\left( f\right)) = \mathrm{Fix}(gfg^{-1})$, there are two cases for us to
consider.

If $\mathrm{Fix}(f)\ $has one element, say $\mathrm{Fix}(f)=\left \{
z_{0}\right \} $, then $\mathrm{Fix}\left( gfg^{-1}\right) =\{g(z_{0})\}.$
But $g(z_{0})\neq z_{0}.$ Thus, $\mathrm{Fix}(f)\cap \mathrm{Fix}\left(
gfg^{-1}\right) =\varnothing .$

If $\mathrm{Fix}(f)\ $has two elements, say $\mathrm{Fix}(f)=$ $\left \{
z_{1},z_{2}\right \} $, then $\mathrm{Fix}\left( gfg^{-1}\right)
=\{g(z_{1}),g(z_{2})\}$ and 
\begin{equation}
g(z_{1})\neq z_{1},g(z_{2})\neq z_{2}.  \label{fixed pts of f not of g}
\end{equation}%
If $\mathrm{Fix}(f)\cap \mathrm{Fix}\left( gfg^{-1}\right) \neq \emptyset ,$
then by \cite[Theorem 5.1.2]{Beardon} $f$ and $gfg^{-1}$cannot share exactly
one fixed point, and hence $\mathrm{Fix}(f)=\mathrm{Fix}\left(
gfg^{-1}\right) =g(\mathrm{Fix}(f)).$ Thus, the only possibility is that $g$
interchange the fixed points of $f$, so $g(z_{1})=z_{2}$ and $%
g(z_{2})=z_{1}, $ which implies that $g^{2}$ fixes $z_{1}$and $z_{2}$ and
are not fixed by $g.$ We conclude that $g^{2}$ has at least three fixed
points and this implies that $g^{2}$ is the identity, $g$ has order $2.$and $%
\tilde{\beta}=-4.$
\end{proof}

In \cite[Remark 2,6]{GM00} it is shown that if $\left \langle
f,\,g\right
\rangle $ is a two-generator discrete group, then there are two
elliptic elements $\phi $ and $\psi $ of order $2$ such that 
\begin{equation}
\varphi f\phi ^{-1}=gfg^{-1}\text{ \ and \ }\psi f\psi ^{-1}=gf^{-1}g^{-1}.
\label{two ellipt}
\end{equation}%
In particular, if $f$ is elliptic or loxodromic, then the axes $\mathrm{ax}%
\left( \phi \right) $ and $\mathrm{ax}\left( \psi \right) $ meet at
right-angles to one and other, and they bisect the common perpendicular
between the axes $\mathrm{ax}\left( f\right) $ and $\mathrm{ax}\left(
gfg^{-1}\right) .$

\begin{lemma}
\label{2pars}Let $\left \langle f,\,g\right \rangle $ be a Kleinian group
with principal character $(\gamma ,\beta ,\widetilde{\beta }).$ Then there
are two elliptic elements $\phi $ and $\psi $ of order $2$ such that $\Gamma
_{\phi }=\langle f,\phi \rangle $ and\ $\Gamma _{\psi }=\langle f,\psi
\rangle $ are discrete $\mathbb{Z}_{2}$-extension of $\Gamma ^{g}$ with
principal characters $(\gamma ,\beta ,-4)$ and $(\beta -\gamma ,\beta ,-4)$,
respectively.
\end{lemma}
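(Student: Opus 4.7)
My plan is to derive Lemma \ref{2pars} directly from identity (\ref{two ellipt}) of \cite{GM00}, whose formulation already produces $\phi$ and $\psi$ for every two-generator discrete group, with no need for the restriction $\gamma\neq \beta$ that appears in \cite[Lemma 2.29]{GehMar}. Fix the order-two involutions $\phi ,\psi$ furnished by (\ref{two ellipt}). There are four things to verify: (i) $\Gamma _{\phi}$ and $\Gamma _{\psi}$ contain $\Gamma^{g}$ as an index-at-most-two subgroup; (ii) both are discrete; (iii) $\gamma (f,\phi )=\gamma$; and (iv) $\gamma (f,\psi )=\beta -\gamma$.

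For (i), I would check that $\phi$ normalizes $\Gamma^{g}=\langle f,gfg^{-1}\rangle$: one generator goes to $\phi f\phi ^{-1}=gfg^{-1}\in \Gamma ^{g}$ by definition, while the other, using $\phi ^{2}=\mathrm{id}$, goes to $\phi (gfg^{-1})\phi ^{-1}=\phi (\phi f\phi ^{-1})\phi ^{-1}=f\in \Gamma^{g}$. The same argument works for $\psi$ after noting $\psi f\psi ^{-1}=(gfg^{-1})^{-1}\in \Gamma^{g}$. Hence $\Gamma_{\phi}=\Gamma^{g}\cup \phi \Gamma^{g}$ and $\Gamma_{\psi}=\Gamma^{g}\cup \psi \Gamma^{g}$, both realised as $\mathbb{Z}_{2}$-extensions. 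For (ii), $\Gamma^{g}$ is discrete as a subgroup of the Kleinian (hence discrete) group $\langle f,g\rangle$, and discreteness of a group that is a finite union of cosets of a discrete subgroup follows by a routine subsequence argument in the spirit of Lemma \ref{virtually Kleinian}: if $h_{n}\to \mathrm{id}$ in $\Gamma_{\phi}$, then after passing to a subsequence lying in one coset, the closedness and discreteness of $\Gamma^{g}$ force $h_{n}$ to be eventually the identity.

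The content of the lemma lies in the principal-character calculations (iii) and (iv). The $\beta$-entries are immediate since $\phi ,\psi$ have order two and $f$ is unchanged. For (iii) I would multiply $\phi f\phi ^{-1}=gfg^{-1}$ on the right by $f^{-1}$ to obtain the matrix identity $[\phi ,f]=[g,f]$, whence $\mathrm{tr}[f,\phi ]=\mathrm{tr}[f,g]$ and $\gamma (f,\phi )=\gamma$. Step (iv) is where I expect the main obstacle, though it is ultimately a routine $\mathrm{SL}(2,\mathbb{C})$ trace computation: the same manipulation applied to $\psi f\psi ^{-1}=gf^{-1}g^{-1}$ yields $[\psi ,f]=gf^{-1}g^{-1}f^{-1}$, and so one must evaluate $\mathrm{tr}(gf^{-1}g^{-1}f^{-1})$. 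Applying the elementary identities $\mathrm{tr}(XY)+\mathrm{tr}(XY^{-1})=\mathrm{tr}(X)\mathrm{tr}(Y)$ and $\mathrm{tr}(X^{2})=\mathrm{tr}^{2}(X)-2$ with the choice $X=gf^{-1}$, $Y=g^{-1}f^{-1}$ (so that $XY^{-1}=g^{2}$), and then eliminating $\mathrm{tr}(f),\mathrm{tr}(g),\mathrm{tr}(fg)$ via the Fricke identity $\mathrm{tr}[f,g]=\mathrm{tr}^{2}(f)+\mathrm{tr}^{2}(g)+\mathrm{tr}^{2}(fg)-\mathrm{tr}(f)\mathrm{tr}(g)\mathrm{tr}(fg)-2$, collapses the resulting expression to $\gamma (f,\psi )=\beta -\gamma$, completing the proof.
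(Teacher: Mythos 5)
Your proposal is correct in substance but follows a genuinely different route from the paper's. Both arguments start from the same involutions $\phi,\psi$ supplied by (\ref{two ellipt}) from \cite{GM00}, and your verification of normality, index two, and discreteness is the routine coset argument the paper leaves implicit (it is in the spirit of Lemma \ref{virtually Kleinian}). The difference lies in how the $\gamma$-entries are identified. The paper applies the identity $\gamma(f,hfh^{-1})=\gamma(f,h)\left(\gamma(f,h)-\beta\right)$ of \cite[Lemma 2.1]{GehMar} with $h=g,\phi,\psi$; equating gives a quadratic whose roots are $\gamma$ and $\beta-\gamma$, and the two-valued ambiguity is resolved only by citing \cite[Identity (6.9)]{Martin3}. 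You instead compute each trace directly: $[\phi,f]=[g,f]$ yields $\gamma(f,\phi)=\gamma$ outright, and your manipulation for $\psi$ does collapse correctly --- one gets $\mathrm{tr}(gf^{-1}g^{-1}f^{-1})-2=\mathrm{tr}(f)\mathrm{tr}(g)\mathrm{tr}(fg)-\mathrm{tr}^{2}(g)-\mathrm{tr}^{2}(fg)$, which equals $\beta-\gamma$ by the Fricke identity. This is more self-contained and, unlike the paper's argument, determines which involution carries which character. One caveat you should flag: the relations (\ref{two ellipt}) are equalities of M\"obius transformations, so the $\mathrm{SL}(2,\mathbb{C})$ lifts satisfy only $\Phi F\Phi^{-1}=\pm GFG^{-1}$, and comparing traces forces the sign to be $+$ only when $\mathrm{tr}(f)\neq 0$. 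Thus in the degenerate case $\beta=-4$ (where $f=f^{-1}$ and the two relations in (\ref{two ellipt}) coincide) your direct computation yields only $\gamma(f,\phi)\in\{\gamma,\beta-\gamma\}$, the same ambiguity the paper resolves by its citation; either treat that case separately or fall back on the quadratic argument there.
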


\begin{proof}
Note $\Gamma ^{g}=\langle f,gfg^{-1}\rangle $ is a subgroup of both $\Gamma
_{\phi }=\langle f,\phi \rangle $ and\ $\Gamma _{\psi }=\langle f,\psi
\rangle .$ Suppose that the principal characters for $\Gamma _{\phi }$ and $%
\Gamma _{\psi }$ are $(\gamma _{1},\beta ,-4)$ and $(\gamma _{2},\beta ,-4),$
respectively. Using \cite[Lemma 2.1]{GehMar} 
\begin{align*}
\gamma (\gamma -\beta )& =\gamma (f,gfg^{-1})=\gamma (f,\phi f\phi
^{-1})=\gamma _{1}(\gamma _{1}-\beta ), \\
\gamma (\gamma -\beta )& =\gamma (f,gfg^{-1})=\gamma (f,\psi f\psi
^{-1})=\gamma _{2}(\gamma _{2}-\beta ).
\end{align*}%
we have two quadratic equations 
\begin{align*}
\gamma _{1}^{2}-\beta \gamma _{1}-\gamma (\gamma -\beta )& =0, \\
\gamma _{2}^{2}-\beta \gamma _{2}-\gamma (\gamma -\beta )& =0.
\end{align*}
Solving these two quadratic equations, $\gamma _{1}=\gamma ,$ or $\beta
-\gamma $ and $\gamma _{2}=\gamma ,$ or $\beta -\gamma .$ As \cite[Identity
(6.9)]{Martin3} shows both possibilities occur, so%
\begin{equation*}
\{ \gamma _{1},\gamma _{2}\}=\{ \gamma ,\beta -\gamma \}.
\end{equation*}
Thus, after relabeling, the principal characters for $\Gamma _{\phi }$ and $%
\Gamma _{\psi }$ are $(\gamma ,\beta ,-4)$ and $(\beta -\gamma ,\beta ,-4),$
respectively. This completes the proof.
\end{proof}

\section{ Exceptional set of principal characters.}

Now the natural question is to determine whether two discrete groups $\Gamma
_{\phi }=\langle f,\phi \rangle $ and\ $\Gamma _{\psi }=\langle f,\psi
\rangle $ in Lemma \ref{2pars} are actually Kleinian. Since both $\Gamma
_{\phi }$ and\ $\Gamma _{\psi }$ are $\mathbb{Z}_{2}$-extension of $\Gamma
^{g},$ by Lemma \ref{virtually Kleinian}, we only need to decide if $\Gamma
^{g}$ is a Kleinian subgroup. First, we list the possible principal
characters in Tables $1,2,$ and $3$ representing two-generator elementary
discrete groups. The set of principal characters occurring in these three
tables is called the \emph{exceptional set of principal characters. } We
recall from \cite[ Section 5.1]{Beardon}.

\begin{theorem}[Classification of elementary discrete groups]
\label{classific of disc elem}Let $G$ be an elementary discrete group of $%
\mathrm{M\ddot{o}b}^{+}(\overline{\mathbb{C}})$, then $G$ is isomorphic to
one of the following groups, where $p$ $\in \left \{ 1,2,\cdots ,\infty
\right \} .$\newline
$(a)$ A cyclic group ${\mathbb{Z}}_{p}:$%
\begin{equation*}
\langle a:a^{p}=Id\rangle .
\end{equation*}
$(b)$ A dihedral group $D_{p}\cong {\mathbb{Z}}_{p}\ltimes {\mathbb{Z}}_{2}:$%
\begin{equation*}
\langle a,b:a^{p}=b^{2}=Id,\text{ }bab^{-1}=a^{-1}\rangle .
\end{equation*}
$(c)$ The group $({\mathbb{Z}}_{p}\times {\mathbb{Z}})\ltimes {\mathbb{Z}}%
_{2}$ or ${\mathbb{Z}}_{p}\times {\mathbb{Z}}:$%
\begin{equation*}
\langle a,b,c:aba^{-1}b^{-1}=b^{p}=c^{2}=Id,\text{ }cac^{-1}=a^{-1},\text{ }%
cbc^{-1}=b^{-1}\rangle .
\end{equation*}
$(d)$ A Euclidean translation group ${\mathbb{Z}}\times {\mathbb{Z}}.$%
\newline
$(e)$ A Euclidean triangle group $\Delta (2,3,6),$ or $\Delta (2,4,4),$ or $%
\Delta (3,3,3):$%
\begin{equation*}
\langle a,b:a^{p}=b^{q}=(ab)^{r}=Id,\text{ }\frac{1}{p}+\frac{1}{q}+\frac{1}{%
r}=1\rangle .
\end{equation*}
$(f)$ A finite spherical triangle group $A_{4}=\Delta (2,3,3)$, or $%
S_{4}=\Delta (2,3,4),$ or $A_{5}=\Delta (2,3,5):$%
\begin{equation*}
\langle a,b:a^{p}=b^{q}=(ab)^{r}=Id,\text{ }\frac{1}{p}+\frac{1}{q}+\frac{1}{%
r}>1\rangle .
\end{equation*}
\end{theorem}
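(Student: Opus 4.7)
The classification is standard (it reproduces \cite[\S 5.1]{Beardon}), and the plan is to organise the proof by the cardinality of the limit set $L(\Gamma) \subset \overline{\mathbb{C}}$. A discrete subgroup of $\mathrm{M\ddot{o}b}^{+}(\overline{\mathbb{C}})$ is elementary if and only if its limit set is finite, and a general discreteness argument forces $|L(\Gamma)| \in \{0,1,2\}$. So the proof splits into three cases, each of which produces a sublist of (a)--(f).

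First I would handle $|L(\Gamma)| = 0$. Here every $\Gamma$-orbit in $\overline{\mathbb{C}}$ is finite, so every element is elliptic (no parabolic or loxodromic element can be present without producing a limit point). Averaging a finite orbit in $\mathbb{H}^3$ against a suitable Busemann cocycle, or using the standard fact that a discrete group of elliptic M\"obius transformations has a common fixed point in $\mathbb{H}^3$, one conjugates $\Gamma$ into $\mathrm{Stab}(x_0) \cong SO(3)$. The classical classification of finite subgroups of $SO(3)$ then yields the cyclic groups ${\mathbb{Z}}_p$, the dihedral groups $D_p$ with $p<\infty$, and the spherical triangle groups $\Delta(2,3,3)$, $\Delta(2,3,4)$, $\Delta(2,3,5)$, giving the $p<\infty$ part of (a), (b) and all of (f). Next, for $|L(\Gamma)| = 2$, I conjugate the two fixed points to $\{0,\infty\}$; the index $\leq 2$ subgroup $\Gamma_0$ fixing each of them pointwise lies in the diagonal torus $\mathbb{C}^\ast$, and discreteness forces it to be cyclic, generated by either a loxodromic or a finite order elliptic. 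Either $\Gamma = \Gamma_0$, giving the infinite cyclic and ${\mathbb{Z}}_p$ cases, or $\Gamma$ contains an involution $w \mapsto \mu^2/w$ interchanging $0$ and $\infty$, giving the dihedral extensions $D_p$ or $D_\infty$. Combined with Case $|L|=0$ this finishes (a) and (b).

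The main obstacle is $|L(\Gamma)| = 1$. Conjugate the unique limit point to $\infty$; since a loxodromic would produce a second limit point, $\Gamma$ consists of orientation-preserving Euclidean isometries of $\mathbb{C}$, and $\Gamma$ must contain a parabolic. Let $T \trianglelefteq \Gamma$ be the translation subgroup. Discreteness forces $T \cong \mathbb{Z}$ or $T \cong \mathbb{Z}\times\mathbb{Z}$, and $\Gamma/T$ is a finite cyclic group of rotations that must preserve the lattice $T$. The crystallographic restriction therefore permits $|\Gamma/T| \in \{1,2,3,4,6\}$, and the hard step is to enumerate the resulting semidirect products: a case-by-case analysis shows that the only possibilities are $\mathbb{Z}$, $\mathbb{Z}\times\mathbb{Z}$, the group $(\mathbb{Z}_p \times \mathbb{Z})\rtimes\mathbb{Z}_2$ obtained by extending a rank-one lattice by a compatible rotation, and the three Euclidean triangle groups $\Delta(2,3,6)$, $\Delta(2,4,4)$, $\Delta(3,3,3)$, producing exactly (c), (d) and (e). This crystallographic enumeration is the technical heart of the argument; with it in place, assembling the three cases gives the complete list.
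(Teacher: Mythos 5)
The paper offers no proof of this theorem at all: it is quoted verbatim from Beardon, \S 5.1, with the sentence ``We recall from [Beardon, Section 5.1]'' standing in for an argument. So your proposal cannot be measured against an argument in the paper; judged on its own terms, the skeleton you chose (trichotomy on $|L(\Gamma)|\in\{0,1,2\}$, finite subgroups of $SO(3)$ for $|L|=0$, Euclidean crystallographic restriction for $|L|=1$) is the standard and correct one, and cases (a), (b), (d), (e), (f) come out as you say.

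There is, however, a genuine error in your treatment of $|L(\Gamma)|=2$, and it is exactly the step that produces case (c). You assert that the pointwise stabilizer $\Gamma_0$ of $\{0,\infty\}$, viewed inside the diagonal torus $\mathbb{C}^{\ast}$, is forced by discreteness to be cyclic. That is false: $\mathbb{C}^{\ast}\cong\mathbb{R}\times(\mathbb{R}/2\pi\mathbb{Z})$ has discrete subgroups isomorphic to $\mathbb{Z}\times\mathbb{Z}_{p}$, e.g.\ the group generated by the loxodromic $z\mapsto 4z$ and the elliptic $z\mapsto iz$, which share the axis over $\{0,\infty\}$. These groups, together with their $\mathbb{Z}_{2}$-extensions by an involution $z\mapsto\mu/z$ swapping the fixed points, are precisely case (c) with $p$ finite, and your $|L|=2$ analysis omits them entirely. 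You then try to recover (c) in the Euclidean case as ``extending a rank-one lattice by a compatible rotation,'' but this cannot work for $2\le p<\infty$: a nontrivial rotation of $\mathbb{C}$ never commutes with a nontrivial translation, so $\mathbb{Z}_{p}\times\mathbb{Z}$ with $p\ge 2$ has no discrete faithful realization with a single limit point, and the rank-one Euclidean case yields only $\mathbb{Z}$ and $D_{\infty}$. (What the rank-two crystallographic enumeration does contribute, besides $\mathbb{Z}^{2}$ and the three triangle groups, is $\mathbb{Z}^{2}\rtimes\mathbb{Z}_{2}$ with point group $\{\pm 1\}$; that is case (c) read with $p=\infty$, so the listed presentation is the right target there, but not for finite $p$.) To repair the proof, classify discrete subgroups of $\mathbb{C}^{\ast}$ correctly as trivial, $\mathbb{Z}_{p}$, $\mathbb{Z}$, or $\mathbb{Z}\times\mathbb{Z}_{p}$, and let the last of these carry case (c) for finite $p$.
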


Let $f$ and $g$ be elliptic of order $p\in \left \{ 2,3,4,5\right \} $ in $%
\mathrm{M\ddot{o}b}^{+}(\overline{\mathbb{C}}),$ and let $\theta $ be the
angle subtended at the origin between $\mathrm{ax}(f)$ and $\mathrm{ax}(g)$
and hence the hyperbolic distance between those two axes is $\delta =0.$
Then, $\sin ^{2}(\theta )$ can be computed by using \cite[Lemmas 6.19, 6.20,
and 6.21]{Martin3}, the parameters $\beta(g) $ and $\beta ( f) \in \left \{
-3,-4,\frac{\sqrt{5}-5}{2},-\frac{\sqrt{5}+5}{2}\right \} $, and the
commutator parameter is $\gamma \left( f,\,g\right) =\frac{-\beta \left(
f\right) \beta (g)}{4}\sin ^{2}(\theta )$ by \cite[Identity (2.7)]{GM1}.
With these elementary observations, some spherical trigonometry, and Theorem %
\ref{classific of disc elem} about the classification of the elementary
discrete groups, we obtain the following Tables $1$ and $2$, which list the
possible principal characters (including the cases in the thesis \cite{Zhang}%
) of two-generator elementary discrete groups with non-zero commutator
parameters $\gamma (f,g)\neq 0.$ They are either the dihedral groups (if $%
\gamma \left( f,\,g\right) =\beta \left( f\right) $ and $\beta (g)=-4)$ or
the finite spherical triangle groups $A_{4}=\Delta (2,3,3)$, $S_{4}=\Delta
(2,3,4),$ and $A_{5}=\Delta (2,3,5).$

\begin{center}
\begin{equation*}
\ 
\begin{tabular}{|c|c|c|c|c|}
\multicolumn{5}{c}{Table $1.$\ Principal Characters$:2,\;p$.} \\ \hline
$p$ & $\sin ^{2}(\theta )$ & $\gamma $ & groups & principal characters \\ 
\hline
$3$ & $\frac{2}{3}$ & $-2$ & $A_{4}$ & $\left( -2,-3,-4\right) $ \\ \hline
$3$ & $\frac{1}{3}$ & $-1$ & $S_{4}$ & $\left( -1,-3,-4\right) $ \\ \hline
$3$ & $\frac{3-\sqrt{5}}{6}$ & $\frac{\sqrt{5}-3}{2}$ & $A_{5}$ & $\left( 
\frac{\sqrt{5}-3}{2},-3,-4\right) $ \\ \hline
$3$ & $\frac{3+\sqrt{5}}{6}$ & $-\frac{3+\sqrt{5}}{2}$ & $A_{5}$ & $\left( -%
\frac{3+\sqrt{5}}{2},-3,-4\right) $ \\ \hline
$3$ & $1$ & $-3$ & $D_{3}$ & $\left( -3,-3,-4\right) $ \\ \hline
$4$ & $\frac{1}{2}$ & $-1$ & $S_{4}$ & $\left( -1,-2,-4\right) $ \\ \hline
$4$ & $1$ & $-2$ & $D_{4}$ & $\left( -2,-2,-4\right) $ \\ \hline
$5$ & $\frac{5-\sqrt{5}}{10}$ & $\frac{\sqrt{5}-3}{2}$ & $A_{5}$ & $\left( 
\frac{\sqrt{5}-3}{2},\frac{\sqrt{5}-5}{2},-4\right) $ \\ \hline
$5$ & $\frac{5-\sqrt{5}}{10}$ & $-1$ & $A_{5}$ & $\left( -1,-\frac{5+\sqrt{5}%
}{2},-4\right) $ \\ \hline
$5$ & $\frac{5+\sqrt{5}}{10}$ & $-1$ & $A_{5}$ & $\left( -1,\frac{\sqrt{5}-5%
}{2},-4\right) $ \\ \hline
$5$ & $\frac{5+\sqrt{5}}{10}$ & $-\frac{3+\sqrt{5}}{2}$ & $A_{5}$ & $\left( -%
\frac{3+\sqrt{5}}{2},-\frac{5+\sqrt{5}}{2},-4\right) $ \\ \hline
$5$ & $1$ & $\frac{\sqrt{5}-5}{2}$ & $D_{5}$ & $\left( \frac{\sqrt{5}-5}{2},%
\frac{\sqrt{5}-5}{2},-4\right) $ \\ \hline
$5$ & $1$ & $-\frac{5+\sqrt{5}}{2}$ & $D_{5}$ & $\left( -\frac{5+\sqrt{5}}{2}%
,-\frac{5+\sqrt{5}}{2},-4\right) $ \\ \hline
\end{tabular}%
\end{equation*}%
$\ $%
\begin{equation*}
\begin{tabular}{|c|c|c|c|c|}
\multicolumn{5}{c}{Table $2.$\ Principal Characters$:$ $3,\;p$.} \\ \hline
$p$ & $\sin ^{2}(\theta )$ & $\gamma $ & groups & principal characters \\ 
\hline
$3$ & $\frac{4}{9}$ & $-1$ & $A_{5}$ & $\left( -1,-3,-3\right) $ \\ \hline
$3$ & $\frac{8}{9}$ & $-2$ & $A_{4}$ & $\left( -2,-3,-3\right) $ \\ \hline
$4$ & $\frac{2}{3}$ & $-1$ & $S_{4}$ & $\left( -1,-2,-3\right) $ \\ \hline
$5$ & $\frac{10-2\sqrt{5}}{15}$ & $\frac{\sqrt{5}-3}{2}$ & $A_{5}$ & $\left( 
\frac{\sqrt{5}-3}{2},\frac{\sqrt{5}-5}{2},-3\right) $ \\ \hline
$5$ & $\frac{10-2\sqrt{5}}{15}$ & $-1$ & $A_{5}$ & $\left( -1,-\frac{5+\sqrt{%
5}}{2},-3\right) $ \\ \hline
$5$ & $\frac{10+2\sqrt{5}}{15}$ & $-1$ & $A_{5}$ & $\left( -1,\frac{\sqrt{5}%
-5}{2},-3\right) $ \\ \hline
$5$ & $\frac{10+2\sqrt{5}}{15}$ & $-\frac{3+\sqrt{5}}{2}$ & $A_{5}$ & $%
\left( -\frac{3+\sqrt{5}}{2},-\frac{5+\sqrt{5}}{2},-3\right) $ \\ \hline
\end{tabular}%
\end{equation*}
\end{center}

Notice that the angle between intersecting axes of elliptics of order $4$ in
a discrete group is always either $0$ when they meet on the Riemann sphere $%
\overline{\mathbb{C}}$ or $\frac{\pi }{2}.$ This yields the additional
parameter $\left( -1,-2,-2\right) $ for the elementary group $S_{4}$ when
generated by two elements of order $4$. Furthermore, the angle between
intersecting axes of elliptics of order $5$ in a discrete group is either $%
\arcsin \frac{2}{\sqrt{5}}$ or its complement $\arcsin \frac{-2}{\sqrt{5}}$.
After possibly taking powers of the generator of order $5$, the three
additional principal characters can be obtained in the following Table $3$.

\begin{center}
$%
\begin{array}{c}
\begin{tabular}{|c|c|c|c|c|}
\multicolumn{5}{c}{Table $3.$\ Principal Characters$:4,4$ and $5,5$.} \\ 
\hline
$p,p$ & $\sin ^{2}(\theta )$ & $\gamma $ & Group & Parameters \\ \hline
$4,4$ & $1$ & $-1$ & $S_{4}$ & $\left( -1,-2,-2\right) $ \\ \hline
$5,5$ & $\frac{4}{5}$ & $\frac{\sqrt{5}-3}{2}$ & $A_{5}$ & $\left( \frac{%
\sqrt{5}-3}{2},\frac{\sqrt{5}-5}{2},\frac{\sqrt{5}-5}{2}\right) $ \\ \hline
$5,5$ & $\frac{4}{5}$ & $-1$ & $A_{5}$ & $\left( -1,-\frac{5+\sqrt{5}}{2},%
\frac{\sqrt{5}-5}{2}\right) $ \\ \hline
$5,5$ & $\frac{4}{5}$ & $-\frac{3+\sqrt{5}}{2}$ & $A_{5}$ & $\left( -\frac{3+%
\sqrt{5}}{2},-\frac{5+\sqrt{5}}{2},-\frac{5+\sqrt{5}}{2}\right) $ \\ \hline
\end{tabular}%
\end{array}%
\ $
\end{center}

The classification of the elementary discrete groups (Theorem \ref{classific
of disc elem} ) plays an important role in listing the tables above. We make
the following additional remarks.

\begin{itemize}
\item The axes of elliptics both of order $2$ can intersect at an angle $%
\frac{k\pi }{p}$ for any $k$ and $p\geq 2$ giving the dihedral group $D_{p}$
with principal character $\left( -4\sin ^{2}\frac{k\pi }{p},-4,-4\right) $
(see Lemma \ref{two order 2}).

\item The axes of elliptics elements of orders $2$ and $p$, $p\geq 3$, can
meet at right-angles. In this case, the principal character of dihedral
group $D_{p}$ is $\left( -4\sin ^{2}\frac{\pi }{p},-4\sin ^{2}\frac{\pi }{p}%
,-4\right) .$

\item The axes of elliptics of order $p$ and $q$ ( $p\leq q$) in a discrete
group meet on $\overline{\mathbb{C}}$, i.e., meeting with angle $0$, if and
only if 
\begin{equation*}
(p,q)\in \{(2,2),(2,3),(2,4),(2,6),(3,3),(3,6),(4,4),(6,6)\}
\end{equation*}%
For all of these groups $\gamma \left( f,\,g\right) =0.$ In particular,
Euclidean triangle groups $\Delta (2,3,6)$, $\Delta (3,3,3)$, $\Delta
(2,4,4),$ and cyclic groups have $\gamma \left( f,\,g\right) =0.$ Also,
Euclidean translation groups have $\gamma =0.$
\end{itemize}

\begin{lemma}
Let $\left \langle f,\,g\right \rangle $ be a Kleinian group with the
principal character $\left( \gamma ,\beta ,-4\right) $ such that is not one
of those exceptional groups listed in Table $1.$ Then the subgroup $\Gamma
^{g}=\left \langle f,\,gfg^{-1}\right \rangle $ is Kleinian.
\end{lemma}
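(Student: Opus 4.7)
The plan is to reduce the lemma to the enumeration already compiled in Table~1 via the commensurability principle from Lemma~\ref{virtually Kleinian}. First I would observe that $\Gamma^g = \langle f, gfg^{-1}\rangle$ has index two in $\langle f, g\rangle$ by Lemma~\ref{index 2}, so $\Gamma^g$ is automatically discrete and, by Lemma~\ref{virtually Kleinian}, Kleinian precisely when $\langle f, g\rangle$ is. Thus the content of the lemma reduces to showing that $\langle f, g\rangle$ is non-elementary whenever the principal character $(\gamma,\beta,-4)$ lies outside the list in Table~1.

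I would proceed by contrapositive: assume $\langle f,g\rangle$ is elementary and discrete with $\beta(g)=-4$ (so $g$ is elliptic of order $2$), and show that $(\gamma,\beta,-4)$ then appears in Table~1. The hypothesis $\gamma \neq 0$ (which comes from Jørgensen's observation that $\gamma=0$ forces a common fixed point, hence cyclic/abelian cases that should be excluded) rules out the cyclic, translation, and $\mathbb{Z}\times\mathbb{Z}_p$ families in Theorem~\ref{classific of disc elem}, as well as the Euclidean triangle groups, which the bulleted remarks after Table~3 observe all have $\gamma=0$. This leaves only the dihedral groups $D_p$ and the finite spherical triangle groups $A_4,S_4,A_5$.

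Next I would split into subcases by the type of $f$. If $\beta=-4$, Lemma~\ref{two order 2} pins down exactly the dihedral or parabolic outcomes, all of which are accounted for in the listed dihedral entries of Table~1. If $\gamma = \beta$, Lemma~\ref{gammabeta} forces a dihedral group. In the remaining cases, I would use Lemma~\ref{beta & order} to restrict $f$ to elliptic orders $p\in\{3,4,5\}$ (since otherwise the spherical triangle groups $A_4,S_4,A_5$ cannot accommodate a generator of order $p$), and then apply the axis-angle identity
\begin{equation*}
\gamma(f,g) \;=\; -\frac{\beta(f)\beta(g)}{4}\sin^2(\theta)
\end{equation*}
together with the discreteness constraint that the angle $\theta$ between $\mathrm{ax}(f)$ and $\mathrm{ax}(g)$ must be of the form $k\pi/n$ compatible with a spherical triangle group. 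Running through the finitely many admissible $(p,\theta)$ combinations yields exactly the rows of Table~1.

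The main obstacle is really the completeness of Table~1 as an enumeration. This is not hidden conceptual work but a finite and slightly tedious trigonometric verification: for each pair $(p,q)=(2,p)$ with $p\in\{3,4,5\}$, one enumerates all angles permitted by the spherical triangle structure of $A_4,S_4,A_5$ or the dihedral condition $\theta = k\pi/p$, and computes $\sin^2\theta$ and hence $\gamma$. Once Table~1 is known to be exhaustive in this sense, the proof concludes: a character $(\gamma,\beta,-4)$ outside Table~1 cannot support an elementary discrete group, so $\langle f,g\rangle$ is Kleinian, and therefore so is $\Gamma^g$ by Lemma~\ref{virtually Kleinian}.
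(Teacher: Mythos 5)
Your opening step is a correct and complete proof of the lemma as stated, and it is genuinely different from the paper's argument: since $\tilde{\beta}=-4$ forces $g^{2}=Id$, Lemma \ref{index 2} exhibits $\Gamma ^{g}$ as a finite-index subgroup of $\left\langle f,g\right\rangle $, and Lemma \ref{virtually Kleinian} then transfers Kleinian-ness down from $\left\langle f,g\right\rangle $, which is Kleinian \emph{by hypothesis}. You are finished at that point, and the Table $1$ condition is never used. (This is precisely the argument the paper itself deploys in part $(a)$ of Theorem \ref{Th: Ksubgp}.) The paper's proof of the present lemma takes a different path that does not exploit the index-two relation with $\left\langle f,g\right\rangle $ at all: it invokes Lemma \ref{2pars} to realise $(\gamma ,\beta ,-4)$ as the character of a discrete $\mathbb{Z}_{2}$-extension $\Gamma _{h}=\langle f,h\rangle $ of $\Gamma ^{g}$, observes $\gamma (f,gfg^{-1})=\gamma (\gamma -\beta )$, and rules out elementarity of $\Gamma ^{g}$ by splitting into the spherical-triangle case (excluded by the Table $1$ hypothesis) and the case $\gamma (\gamma -\beta )=0$, where $\gamma =\beta $ leads to a dihedral configuration contradicting non-elementarity of $\left\langle f,g\right\rangle $. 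That longer route is the one that matters for the authors' real application, namely deciding whether the groups $\Gamma _{\phi },\Gamma _{\psi }$ of Lemma \ref{2pars} attached to a general Kleinian $\left\langle f,g\right\rangle $ with $\tilde{\beta}\neq -4$ are Kleinian; there $\Gamma ^{g}$ has infinite index in $\left\langle f,g\right\rangle $ and your shortcut is unavailable.

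Two cautions about the remainder of your proposal. The ``reduction'' to proving that $\left\langle f,g\right\rangle $ is non-elementary is vacuous, since non-elementarity is already part of the hypothesis; and the contrapositive you then set out to prove --- elementary and discrete with $\tilde{\beta}=-4$ implies the character appears in Table $1$ --- is false as stated. For example $\left\langle f,g\right\rangle \cong \mathbb{Z}\times \mathbb{Z}_{2}$ with $\gamma =0$, the infinite dihedral group with $f$ loxodromic and $\gamma =\beta $, and the groups $D_{p}$ of Lemma \ref{two order 2} with character $(\gamma ,-4,-4)$ are all elementary, discrete, and have characters absent from Table $1$, which only records a generator of order $p\in \{3,4,5\}$ against one of order $2$. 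These cases are eliminated only by reinstating the assumption that $\left\langle f,g\right\rangle $ is Kleinian, at which point the entire enumeration is unnecessary. So either keep your first paragraph and delete the rest, or restate the lemma with the weaker hypothesis (mere discreteness) that your enumeration is implicitly aimed at --- in which case the enumeration must be enlarged to account for these additional elementary families.
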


\begin{proof}
We need to show that the discrete subgroup $\Gamma ^{g}$ of Kleinian group $%
\left \langle f,\,g\right \rangle $ can not be elementary. Let $\gamma
=\gamma \left( f,\,g\right) $ and $\beta =\beta \left( f\right) .$ By Lemma %
\ref{2pars} there is an elliptic $h$ of order $2$ such that $\Gamma
_{h}=\langle f,h\rangle $ is discrete group containing $\Gamma ^{g}$ with
index two and the principal character for $\Gamma _{h}$ is $(\gamma ,\beta
,-4)$. By the hypothesis, $\left( \gamma ,\beta ,-4\right) $ is not one of
those exceptional groups listed in Table $1,$ then $\Gamma _{h}$ is not a
finite spherical triangle group $A_{4},S_{4},$ and $A_{5}.$ Thus, $\Gamma
^{g}$ can only be elementary if $\Gamma ^{g}=\gamma \left( \gamma -\beta
\right) =0,$ i.e., $\gamma =0$ or $\gamma =\beta .$ Since $\left \langle
f,\,g\right \rangle $ is Kleinian, $\gamma \neq 0.$ So it can only be $%
\gamma =\beta \ $and hence it is the dihedral group. In this case, since $f$
is not of order $2$, $gfg^{-1}=f^{\pm 1}$. In the case $gfg^{-1}=f,$ it
gives $g\left( \mathrm{Fix}(f)\right) =\mathrm{Fix}\left( gfg^{-1}\right) =%
\mathrm{Fix}(f)$ and hence $g$ is elliptic of order $2$ (Lemma \ref%
{fixedpoints}) fixing or interchanging the fixed points of $f$ in $\overline{%
\mathbb{C}}$. In the case $gfg^{-1}=f^{-1},$ $g$ might be a power of $f$. In
either case $\left \langle f,\,g\right \rangle $ is not Kleinian, a
contradiction.
\end{proof}

The following theorem guarantees that $\Gamma ^{g}$ is a Kleinian group if $%
f $ is not elliptic of order $p\leq 6.$

\begin{theorem}
\label{Th: Ksubgp}Suppose that $\left \langle f,\,g\right \rangle $ is a
Kleinian group. If $f$ is not elliptic of order $p\in \{2,3,4,6\}$, then the
subgroup $\Gamma ^{g}=\left \langle f,\,gfg^{-1}\right \rangle $ is Kleinian.
\end{theorem}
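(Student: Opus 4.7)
The plan is to use Lemma \ref{2pars} to replace the original group by the discrete $\mathbb{Z}_{2}$--extension $\Gamma_{\phi}=\langle f,\phi\rangle$ that contains $\Gamma^{g}$ with index two, invoke Lemma \ref{virtually Kleinian} to reduce the theorem to showing that $\Gamma_{\phi}$ is non--elementary, and then exhaust the elementary possibilities via the classification of two--generator elementary discrete groups together with the hypothesis on the order of $f$.

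Explicitly, Lemma \ref{2pars} produces an order--two elliptic $\phi$ with $\Gamma_{\phi}$ discrete and principal character $(\gamma,\beta,-4)$, and Lemma \ref{index 2} identifies $\Gamma^{g}$ with $\langle f,\phi f\phi^{-1}\rangle$ as an index--two subgroup of $\Gamma_{\phi}$; hence $\Gamma^{g}$ is Kleinian if and only if $\Gamma_{\phi}$ is. Suppose, for contradiction, that $\Gamma_{\phi}$ is elementary; the Kleinian hypothesis on $\langle f,g\rangle$ forces $\gamma\neq 0$. The classification of Theorem \ref{classific of disc elem}, read in light of the three bullet points following Table $3$ (cyclic, Euclidean translation and Euclidean triangle groups all satisfy $\gamma=0$, and the groups $(\mathbb{Z}_{p}\times\mathbb{Z})\rtimes\mathbb{Z}_{2}$ reduce to a dihedral $D_{p}$ whenever one generator is an involution), leaves only two families of two--generator elementary discrete groups with principal character of the form $(\gamma,\beta,-4)$ and $\gamma\neq 0$: a dihedral group (Lemma \ref{gammabeta} when $\beta\neq -4$, via $\gamma=\beta$; Lemma \ref{two order 2} when $\beta=-4$), or one of the finite spherical triangle groups $A_{4},S_{4},A_{5}$, whose principal characters are exhaustively enumerated in Table $1$.

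In the dihedral case with $\beta\neq -4$, the relation $\phi f\phi^{-1}=f^{-1}$ combined with \eqref{two ellipt} gives $gfg^{-1}=f^{-1}$. Then $g$ permutes the (at most two) fixed points of $f$ in $\overline{\mathbb{C}}$, so $\mathrm{Fix}(f)$ is a finite $\langle f,g\rangle$--invariant set, forcing $\langle f,g\rangle$ to be elementary and contradicting the Kleinian hypothesis. The residual dihedral case $\beta=-4$ corresponds to $f$ of order $2$, which is excluded by hypothesis. In the finite--spherical case, Table $1$ reveals $\beta\in\{-3,-2,(\sqrt{5}-5)/2,-(5+\sqrt{5})/2\}$, which by Lemma \ref{beta & order} forces $f$ to be elliptic of order $3$, $4$ or $5$; each of these is ruled out by the hypothesis on the order of $f$.

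The main obstacle is verifying the dichotomy in the second paragraph --- that no elementary discrete group outside the dihedral and finite--spherical families has a principal character of the form $(\gamma,\beta,-4)$ with $\gamma\neq 0$. This is a careful case--by--case audit of Theorem \ref{classific of disc elem}, leaning on the geometric remarks following Table $3$. Once the dichotomy is secure, the dihedral case collapses via the single trace identity above and the finite--spherical case reduces to matching $\beta$ against the short list in Table $1$.
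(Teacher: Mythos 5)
Your route---passing to the $\mathbb{Z}_2$-extension $\Gamma_{\phi}$ via Lemma \ref{2pars} and Lemma \ref{virtually Kleinian}, then classifying the elementary possibilities for a discrete group with character $(\gamma,\beta,-4)$ and $\gamma\neq 0$---is genuinely different from the paper's, which works with $\Gamma^{g}$ directly, splits on whether $g$ has order $2$, and disposes of the three Beardon types of elementary group using Lemma \ref{fixedpoints}. Your dihedral branch is sound: $\phi f\phi^{-1}=f^{-1}$ together with $\phi f\phi^{-1}=gfg^{-1}$ from (\ref{two ellipt}) makes $\mathrm{Fix}(f)$ a finite $\langle f,g\rangle$-invariant set, contradicting non-elementarity.

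There is, however, a genuine gap in your finite-spherical branch. The hypothesis excludes only $p\in\{2,3,4,6\}$; it does \emph{not} exclude $p=5$, and the paper clearly intends order $5$ to be covered by the theorem (its own proof reads ``the order of $f$ is not $p\leq 6$ or $p=5$,'' and the remark following the theorem treats $p=5$ as an admissible case). But Table $1$ contains four $A_{5}$ rows in which the non-involutive generator has order $5$, namely those with $\beta\in\bigl\{\tfrac{\sqrt{5}-5}{2},\,-\tfrac{5+\sqrt{5}}{2}\bigr\}$, so your concluding assertion that each spherical possibility is ``ruled out by the hypothesis on the order of $f$'' is false precisely when $f$ has order $5$: you have not excluded the possibility that $(\gamma,\beta)$ equals one of those four pairs and $\Gamma_{\phi}$ is (a subgroup of) $A_{5}$. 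Closing this requires a separate argument that a Kleinian group $\langle f,g\rangle$ with $f$ of order $5$ cannot realise those characters; this is exactly the point where the paper leans on Tables $1$, $2$, and $3$ for the group $\langle f,gfg^{-1}\rangle$ generated by two conjugate order-$5$ elements, together with Lemma \ref{fixedpoints} for the subcase $\gamma(f,gfg^{-1})=0$, i.e.\ $\gamma=\beta$. As written, your argument establishes the theorem only under the stronger hypothesis that $f$ is not elliptic of any order $p\leq 6$.
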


\begin{proof}
$(a)$ Suppose that $g$ is elliptic of order $2.$ Then $\Gamma ^{g}$ is a
subgroup of index two in $\Gamma$ and Lemma \ref{virtually Kleinian} ensures
that $\Gamma ^{g}$ is also Kleinian. $(b)$ Suppose that the order of $g$ is
not $2$. We need to show that $\Gamma ^{g}$ is non-elementary. If $\Gamma
^{g}$ is elementary, then (see \cite[Section 5.1]{Beardon}). $(i)$ $\Gamma
^{g}$ is elementary of type I; each non-trivial element is elliptic. Since
the order of $f$ is not $p\leq 6$ or $p=5$, neither is the order of $%
gfg^{-1} $. If $\gamma \left( f,gfg^{-1}\right) \neq 0,$ then from Tables $%
1,2,$ and $3,$ $\Gamma ^{g}$ is neither the finite spherical triangle groups 
$A_{4},S_{4},$ and $A_{5},$ nor the dihedral groups $D_{3},D_{4},$ and $D_{5}
$. If $\gamma \left( f,gfg^{-1}\right) =0$, then $\gamma=\beta$, and $%
\mathrm{Fix}(f)\cap \mathrm{Fix}\left( gfg^{-1}\right) \neq \emptyset $. On
the other hand, by Lemma \ref{fixedpoints}, $\mathrm{Fix}(f)\cap \mathrm{Fix}%
\left( gfg^{-1}\right) =\emptyset$, a contradiction.

$(ii)$ Suppose $\Gamma ^{g}$ is an elementary group of type II; then $\Gamma
^{g}$ is conjugate to a subgroup of $\mathrm{M\ddot{o}b}(\overline{\mathbb{C}%
})$ fixing $\infty $ whose every element is parabolic of the form $z\mapsto
z+b$, $b\in \mathbb{C}$. Thus, the group $\Gamma ^{g}$ is abelian and hence $%
g\left( \mathrm{Fix}(f)\right) =\mathrm{Fix}\left( gfg^{-1}\right) =\mathrm{%
Fix}(f),$ a contradiction to $\Gamma$ being Kleinian.

$(iii)$ Suppose $\Gamma ^{g}$ is an elementary group of type III; then both $%
f$ and $gfg^{-1}$ are elliptic or both are loxodromic. In either case $%
\Gamma ^{g}$ is abelian and as above this is a contradiction.

We have shown that $\Gamma ^{g}$ cannot be elementary if $g$ does not have
order $2$. Hence in all two cases $\Gamma ^{g}$ is a Kleinian group.
\end{proof}

Notice that a Kleinian group $\langle f,g \rangle$ with $f$ elliptic of
order $p=5$ or $p\geq7$ and $\beta(g)\neq -4$, cannot have $\gamma=\beta$.
If so, then $f$ and $gfg^{-1}$ have a common fixed point. But this discrete
group cannot be a Euclidean group as they do not have elliptics of order $5$
or greater than $6$. Thus $f$ and $gfg^{-1}$ have two common fixed points
and the same trace. Hence $gfg^{-1}=f^{\pm 1}$ and $g$ must fix or
interchange the fixed points of $f$. The first is not possible, and the
second shows $g^2=identity$.

\section{Projections and principal characters}

Our methods to establish new inequalities rely on a fundamental result
concerning spaces of finitely generated Kleinian groups. Namely that they
are closed in the topology of algebraic convergence -- a result originally
due to J\o rgensen \cite{Jorg}. We state this now.

\begin{definition}
A sequence of $n$-generator subgroups $\Gamma _{j}=\left \langle
f_{j,1},f_{j,2},\cdots ,f_{j,n}\right \rangle $ is said to be \emph{%
convergent algebraically} to a $n$-generator subgroup $\Gamma =\langle
f_{1},f_{2},\cdots ,f_{n}\rangle $ in $\mathrm{M\ddot{o}b}(\overline{\mathbb{%
C}})$ if for each $k=1,2,\cdots ,n$, the sequence of the corresponding
generators$\left \{ f_{j,k}\right \} $ converges uniformly to $f_{k}$ in the
spherical metric of $\overline{\mathbb{C}}.$
\end{definition}

\begin{theorem}[J\o rgensen]
\label{Th: Jorgenson} If a sequence of $n$-generator Kleinian subgroups $%
\Gamma _{j}=\left \langle f_{j,1},f_{j,2},\cdots ,f_{j,n}\right \rangle $
converges algebraically to $\Gamma =\langle f_{1},f_{2},\cdots ,f_{n}\rangle 
$ in $\mathrm{M\ddot{o}b}(\overline{\mathbb{C}})$, then $\Gamma $ is a
Kleinian group.

Moreover, the map back is an eventual homomorphism. That is for all
sufficiently large $j$ the map\ $\Gamma \rightarrow \Gamma _{j}$ defined by $%
f_{k}\longmapsto f_{j,k}$ extends to a homomorphism of the groups.
\end{theorem}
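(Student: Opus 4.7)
The plan is to reduce to the two-generator case via the characterization stated in Section~2 (a group is Kleinian if and only if every two-generator subgroup of it is discrete), and then to apply J\o rgensen's inequality as the decisive quantitative input. The underlying continuity principle is that trace polynomials and commutator parameters depend continuously on the generators, so for any word $w$ in $n$ letters, $\beta(w(f_{j,1},\dots,f_{j,n}))\to\beta(w(f_1,\dots,f_n))$, and similarly for the two-variable parameter $\gamma$.

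I would first establish the eventual homomorphism. Given a relation $w(f_1,\dots,f_n)=\mathrm{id}$ in $\Gamma$, set $h_j=w(f_{j,1},\dots,f_{j,n})\in\Gamma_j$; continuity of the group operations and algebraic convergence force $h_j\to\mathrm{id}$. Suppose for contradiction that $h_j\neq\mathrm{id}$ along a subsequence. Since $\Gamma_j$ is non-elementary, some generator $f_{j,k}$ fails to commute with $h_j$; passing to a further subsequence we fix such a $k$, and the subgroup $\langle h_j,f_{j,k}h_j f_{j,k}^{-1}\rangle\subset\Gamma_j$ is two-generator and discrete. Assuming it is non-elementary (the remaining case is discussed below), J\o rgensen's inequality yields
\begin{equation*}
|\beta(h_j)|+|\gamma(h_j,\, f_{j,k}h_jf_{j,k}^{-1})|\ge 1.
\end{equation*}
Using the identity $\gamma(h,fhf^{-1})=\gamma(h,f)(\gamma(h,f)-\beta(h))$ from the introduction, together with $\beta(h_j)=O(\|h_j-\mathrm{id}\|^2)$ and $\gamma(h_j,f_{j,k})=O(\|h_j-\mathrm{id}\|^2)$, the two summands above are $O(\|h_j-\mathrm{id}\|^2)$ and $O(\|h_j-\mathrm{id}\|^4)$ respectively, contradicting $h_j\to\mathrm{id}$. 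Hence $h_j=\mathrm{id}$ for all large $j$, and $f_k\mapsto f_{j,k}$ extends to a homomorphism. Discreteness of $\Gamma$ then follows by the same commutator trick: any sequence of distinct non-trivial elements $g_m=w_m(f_1,\dots,f_n)\to\mathrm{id}$ is approximated by $g_{m,j}=w_m(f_{j,1},\dots,f_{j,n})\in\Gamma_j$, and a diagonal extraction reduces to the situation just handled inside the Kleinian groups $\Gamma_{j(m)}$. Non-elementariness of $\Gamma$ is obtained by exhibiting two-generator words in the $f_k$ whose limits satisfy J\o rgensen's inequality strictly and have loxodromic behaviour inherited from the $\Gamma_j$, ruling out each classification case in Theorem~\ref{classific of disc elem}.

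The main obstacle is the \emph{elementary case}: J\o rgensen's inequality applies only to non-elementary two-generator subgroups, so whenever $\langle h_j,f_{j,k}h_jf_{j,k}^{-1}\rangle$ happens to be elementary I would invoke the classification in Theorem~\ref{classific of disc elem} (together with the exceptional characters tabulated in Tables~$1$--$3$) to force $f_{j,k}$ to normalise $\langle h_j\rangle$ or share a fixed point with it, and then refute this by varying $k$ or by replacing $h_j$ with a suitable power or conjugate. A secondary technical point is the parabolic case, where $\beta$ of a generator vanishes in the limit and the basic inequality must be supplemented by a Shimizu-Leutbecher-type displacement estimate for horoballs --- this is the heart of J\o rgensen's original argument in \cite{Jorg}.
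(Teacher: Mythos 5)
First, a point of comparison: the paper does not prove this theorem at all. It is quoted as J\o rgensen's theorem and attributed to \cite{Jorg}, so there is no in-paper argument to measure your attempt against. Judged on its own, your sketch follows the broad lines of J\o rgensen's original proof: continuity of the trace parameters in the generators, the identity $\gamma(h,fhf^{-1})=\gamma(h,f)\bigl(\gamma(h,f)-\beta(h)\bigr)$, and J\o rgensen's inequality applied inside the approximating groups $\Gamma_j$ to force words $h_j\to\mathrm{id}$ to be trivial. The quantitative core is correct (including the orders of vanishing of $\beta(h_j)$ and $\gamma(h_j,f_{j,k})$, though only ``both tend to zero'' is needed), and the observation that a nontrivial central element is impossible in a non-elementary discrete group is what lets you pick a non-commuting generator.

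However, the two steps you explicitly defer are precisely where the content of the theorem lies, and as written they are genuine gaps. (i) The elementary case: when $\langle h_j,\,f_{j,k}h_jf_{j,k}^{-1}\rangle$ is elementary, J\o rgensen's inequality is simply unavailable, and ``vary $k$ or replace $h_j$ by a power or conjugate'' is a plan, not an argument. You must show that for each large $j$ some definite choice produces a non-elementary two-generator subgroup whose parameters still tend to zero; for instance $h_j$ could be elliptic of order $n_j\to\infty$ whose axis is respected by several generators simultaneously, and Tables $1$--$3$ of this paper classify characters of elementary groups, not the dynamical configurations you need to exclude. (ii) Non-elementariness of the limit $\Gamma$: the proposal to ``exhibit two-generator words whose limits satisfy J\o rgensen's inequality strictly and have loxodromic behaviour inherited from the $\Gamma_j$'' is close to circular, since neither loxodromicity nor a strict inequality is preserved under algebraic limits --- loxodromics can degenerate to parabolics or to the identity, and $|\beta|+|\gamma|$ can drop to the boundary value. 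That this step is not automatic is underlined by the paper's own remark immediately after the statement: the higher-dimensional analogue fails without extra hypotheses such as bounded torsion or geometric position of the limit sets. So your text is a reasonable outline of the known strategy, but the missing steps are the heart of \cite{Jorg} and would need to be supplied in full before this could be called a proof.
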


There is a higher dimensional analogue to this J\o rgensen's theorem, but
additional hypotheses are necessary -- for instance giving a uniform bound
on the torsion in a sequence, or asking that the limit set be in geometric
position (see \cite[Proposition 5.8]{Martin}).

\bigskip

Now we show the slice $\mathcal{D}_2$ is closed. Notice that we are free to
normalize a sequence of two-generator groups by conjugation and to pass to a
subsequences. First a preliminary result.

\begin{theorem}
\label{D* closed}The slice%
\begin{equation*}
\mathcal{D}^{\ast }=\left \{ \left( \gamma ,\beta ,-4\right) :\left( \gamma
,\beta ,-4\right) \text{ is the principal character of a Kleinian group}%
\right \}
\end{equation*}%
\ is closed in the two complex dimensional space $\mathbb{C} ^{2}$ in the
usual topology.
\end{theorem}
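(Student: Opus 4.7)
The plan is to take any convergent sequence $\{(\gamma_n,\beta_n,-4)\}$ in $\mathcal{D}^\ast$ with limit $(\gamma,\beta,-4)$, choose appropriately normalized Kleinian representatives, extract an algebraically convergent subsequence of generating pairs, and apply J\o rgensen's algebraic convergence theorem (Theorem \ref{Th: Jorgenson}) to conclude that the limit group is Kleinian.

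First I would select, for each $n$, a two-generator Kleinian group $\Gamma_n = \langle f_n,\phi_n\rangle$ realizing $(\gamma_n,\beta_n,-4)$, with $\phi_n$ elliptic of order $2$. Conjugating in $\mathrm{M\ddot{o}b}^+(\overline{\mathbb{C}})$, I would normalize so that every $\phi_n$ equals the standard order-$2$ element $\phi(z)=-z$, represented by the matrix with diagonal entries $i,-i$ and fixed points $0,\infty$. Since $\gamma_n\neq 0$, $f_n$ shares no fixed point with $\phi$, which forces the off-diagonal entries of $f_n$ to be nonzero; and since $\Gamma_n$ is non-elementary, $f_n$ cannot swap $\{0,\infty\}$ either. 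Using the remaining diagonal conjugacy (which commutes with $\phi$), I further normalize the upper-right entry of $f_n$ to $1$.

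A direct trace computation in this normalization produces, writing $f_n$ with entries $a_n,1,c_n,d_n$, the relations $c_n = \gamma_n/4$, $(a_n+d_n)^2 = \beta_n+4$, and $(a_n-d_n)^2 = \beta_n-\gamma_n$. All four matrix entries of $f_n$ are therefore bounded in terms of the convergent (hence bounded) sequence $(\gamma_n,\beta_n)$. After passing to a subsequence with coherent choices of the two square-root signs, $f_n \to f$ in $\mathrm{SL}(2,\mathbb{C})$, and so $\Gamma_n \to \Gamma := \langle f,\phi\rangle$ algebraically in the sense of the definition preceding Theorem \ref{Th: Jorgenson}. Continuity of trace immediately gives that the principal character of $\Gamma$ equals $(\gamma,\beta,-4)$. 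Theorem \ref{Th: Jorgenson} then delivers that $\Gamma$ is Kleinian, so $(\gamma,\beta,-4)\in\mathcal{D}^\ast$.

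The main obstacle I anticipate is securing the normalization in a convergent way. The non-vanishing of the off-diagonal entries (hence the legitimacy of pinning the upper-right entry to $1$ by a diagonal conjugation) uses the Kleinian hypothesis $\gamma_n\neq 0$ combined with non-elementariness, while the consistent selection of square-root branches is handled by passing to a subsequence among finitely many sign combinations. Once algebraic convergence is in hand, J\o rgensen's theorem does the substantive work of showing the limit is not merely discrete but also non-elementary.
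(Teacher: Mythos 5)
Your proof is correct, and it takes a genuinely different route from the paper's. You normalize the order-$2$ generator to the fixed rotation $\phi(z)=-z$ and then read off all entries of $f_n$ from the trace identities $\gamma_n=4b_nc_n$, $(a_n+d_n)^2=\beta_n+4$, $(a_n-d_n)^2=\beta_n-\gamma_n$ (all of which check out), so boundedness of the matrix entries is immediate from convergence of $(\gamma_n,\beta_n)$ and compactness does the rest before a single application of Theorem \ref{Th: Jorgenson}. The paper instead normalizes the \emph{other} generator $f_j$ into diagonal or parabolic form, which forces a case analysis (parabolic versus loxodromic/elliptic, and within the latter whether $\lambda_j\to 1$, i.e.\ $\beta_j\to 0$ --- a case that genuinely occurs, as the Dehn surgery example shows) and invokes J\o rgensen's inequality to keep certain entries from degenerating. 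Your approach buys uniformity and brevity --- no cases, no appeal to J\o rgensen's inequality; the paper's approach buys explicit normal forms for $f_j$ and $g_j$ that are reused in the surrounding discussion and that make the boundary phenomenon $\beta_j\to 0$ visible. Two small remarks: the observation that $f_n$ cannot swap $\{0,\infty\}$ is not actually needed (the diagonal conjugation pinning $b_n=1$ only requires $b_n\neq 0$, which already follows from $\gamma_n=4b_nc_n\neq 0$); and rather than tracking square-root branches you could simply note that $a_n$ and $d_n$ are bounded sequences and invoke Bolzano--Weierstrass, though your branch-selection argument is equally valid.
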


\begin{proof}
Suppose that $\left \{ (\gamma _{j},\beta _{j},-4)\right \} $ is a sequence
of principal characters of Kleinian groups, say $\{ \left
\langle
f_{_{j}},g_{_{j}}\right \rangle \}$. We proceed by considering two cases: up
to subsequence, $f_{i}$ is parabolic for all $i$ or not.

\medskip

\noindent \textbf{Case (a).} Suppose $f_{_{j}}$ is parabolic for all $j$.
Conjugate each $\Gamma _{j}$ so that the first generator is now represented
by the matrix $f_{_{j}}=\Big( 
\begin{array}{cc}
1 & 1 \\ 
0 & 1%
\end{array}%
\Big)$. Then $f_{_{j}}$ is constant and it remains to show that the sequence 
$\{g_{_{j}}\}$ also converges. Since $tr(g_{_{j}})=0$, we suppose the matrix
for second generator is 
\begin{equation*}
g_{_{j}}=\left( 
\begin{array}{cc}
a_{j} & b_{_{j}} \\ 
c_{j} & -a_{j}%
\end{array}%
\right) .
\end{equation*}

We calculate that $\gamma_j=c_j^2$ and hence $c_j\to c$. J\o rgensen's
inequality gives $|\gamma_j|\geq 1$ for all $j$ and hence $c_j\to c \neq 0$.

Set 
\begin{equation*}
h_{_{j}}=\left( 
\begin{array}{cc}
1 & \frac{-a_{j}+i\sqrt{1+c_{_{j}}}}{c_{_{j}}} \\ 
0 & 1%
\end{array}%
\right) \text{ }\  \text{and \ }h_{_{j}}g_{_{j}}h_{_{j}}^{-1}=\left( 
\begin{array}{cc}
i\sqrt{1+c_{_{j}}} & 1 \\ 
c_{i} & -i\sqrt{1+c_{_{j}}}%
\end{array}%
\right) .
\end{equation*}
Since $f_{_{j}}$ commutes with $h_{j},$ $f_{_{j}}$ is left unchanged under
conjugation. The result follows by Theorem \ref{Th: Jorgenson}.

\medskip

\noindent \textbf{Case (b).} We now suppose that $f_{j}$ is not parabolic
for all $j$. By conjugation, we may assume that $f_{_{j}}$ is represented by
the matrix 
\begin{equation*}
f_{_{j}}=\left( 
\begin{array}{cc}
\lambda _{_{j}} & 0 \\ 
0 & \frac{1}{\lambda _{_{j}}}%
\end{array}%
\right) ,
\end{equation*}%
where $\lambda _{j}\neq \pm 1.$ Thus the parameter $\beta _{j}=(\lambda
_{_{j}}-\frac{1}{\lambda _{j}})^{2},$ which gives the quadratic equation 
\begin{equation*}
\lambda _{_{j}}^{2}-\sqrt{\beta _{j}}\lambda _{_{j}}-1=0
\end{equation*}
and hence $\lambda _{_{j}}=\frac{\sqrt{\beta _{j}}\pm \sqrt{\beta _{j}+4}}{2}%
.$ Since $\beta _{j}$ $\rightarrow \beta ,$ $\lambda _{i}$ converges to $%
\lambda =\frac{\sqrt{\beta }\pm \sqrt{\beta +4}}{2}.$ Noting that $\lambda
\neq 0,\pm 1$ for either choice of $\lambda $, we conclude that $f_{j}$
converges to the element $f$ represented by the matrix 
\begin{equation*}
f_{_{j}}\rightarrow f=\left( 
\begin{array}{cc}
\lambda & 0 \\ 
0 & \frac{1}{\lambda }%
\end{array}%
\right) \text{ and }\beta _{j}\rightarrow \beta =\Big(\lambda -\frac{1}{%
\lambda }\Big)^{2}.
\end{equation*}

It remains to show that the order $2$ elements $g_{j}$ also converge. Note
first that $\gamma_j = -\beta_j b_jc_j$ and so we may assume that $%
b_jc_j\neq 0$. As in the previous case the matrix for $g_{_{j}}$ can be
written as 
\begin{equation*}
g_{_{j}}=\Big( 
\begin{array}{cc}
a_{_{j}} & -\frac{1+a_{_{j}}{}^{2}}{c_{_{j}}} \\ 
c_{_{j}} & -a_{_{j}}%
\end{array}%
\Big) .
\end{equation*}
so that $a_j\neq \pm i$ and $1+a_j^2\neq 0$. Consider the conjugacy of the
group $\langle f_{_{j}},g_{j}\rangle $ by the following diagonal matrix, 
\begin{equation*}
\phi _{_{j}}=\left( 
\begin{array}{cc}
i\, \sqrt{\frac{c_{j}}{1+a_{_{j}}^{2}}} & 0 \\ 
0 & -i\, \sqrt{\frac{1+a_{_{j}}^{2}}{c_{_{j}}}}%
\end{array}%
\right),
\end{equation*}%
and then $g_{j}$ (replaced by as the conjugacy $\phi _{_{j}}g_{_{j}}\phi
_{_{j}}^{-1}:$) has the form 
\begin{equation*}
g_{_{j}}=\left( 
\begin{array}{cc}
a_{_{j}} & 1 \\ 
-(1+a_{_{j}}^{2}) & -a_{_{j}}%
\end{array}%
\right) .
\end{equation*}%
Since $f_{_{j}}$ commutes with $\phi _{_{j}},$ $f_{_{j}}$ is left unchanged
under the conjugation.

The commutator is now 
\begin{equation*}
\lbrack f_{_{j}},g_{j}]=f_{_{j}}g_{j}f_{_{j}}^{-1}g_{j}^{-1}=\left( 
\begin{array}{cc}
-a_{_{j}}^{2}+\lambda _{j}^{2}(1+a_{_{j}}^{2}) & -a_{j}+\lambda
_{_{j}}^{2}a_{j} \\ 
\frac{a_{j}\left( 1+a_{_{j}}^{2}\right) }{\lambda _{_{j}}^{2}}%
-a_{j}(1+a_{_{j}}^{2}) & \frac{1+a_{j}{}^{2}}{\lambda _{_{j}}^{2}}%
-a_{_{j}}^{2}%
\end{array}%
\right) .
\end{equation*}

Hence the parameter $\gamma _{j}$ for the group $\left \langle
f_{j},g_{j}\right \rangle $ is 
\begin{equation}
\gamma _{j} =\frac{a_{_{j}}^{2}\left( \lambda _{_{j}}^{2}-1\right)
^{2}+\left( \lambda _{_{j}}^{2}-1\right) ^{2}}{\lambda _{_{j}}^{2}}.
\end{equation}
At this point we would like to show $\lambda_j \not \to 1$, but this can
actually happen in limits of Dehn surgeries - see the following example.
Thus we have two subcases:

\noindent \textbf{Case (b) (i)} Suppose $\lambda_j \not \to 1$.

Solving for $a_{j}$ yields%
\begin{equation*}
a_{_{j}}^{2}=\frac{\gamma _{j}\lambda _{_{j}}^{2}-\left( \lambda
_{_{j}}^{2}-1\right) ^{2}}{\left( \lambda _{_{j}}^{2}-1\right) ^{2}}
\rightarrow \frac{\gamma \lambda ^{2}-\left( \lambda ^{2}-1\right) ^{2}}{%
\left( \lambda ^{2}-1\right) ^{2}}.
\end{equation*}%
Let $a^{2}=\frac{\gamma \lambda ^{2}-\left( \lambda ^{2}-1\right) ^{2}}{%
\left( \lambda ^{2}-1\right) ^{2}},$ then $g_{j}$ converges to the element $%
g $ of $\mathrm{PSL}(2,\mathbb{C})$ represented by the matrix%
\begin{equation*}
g=\left( 
\begin{array}{cc}
a & 1 \\ 
-(1+a^{2}) & -a%
\end{array}%
\right) .
\end{equation*}
and $\gamma_j \to \gamma(f,g)=\gamma$. Applying Theorem \ref{Th: Jorgenson},
the limit group $\Gamma =\left
\langle f,\,g\right \rangle $ is Kleinian.
\medskip

\noindent \textbf{Case (b) (ii)} Suppose $\lambda_j \not \to 1$. This is
equivalent to $\beta_j\to0$. For each $j$ we may conjugate $f_j$ and $g_j$
so that $f_j$ has a fixed point at $\infty$ and $g_j$ has a fixed point at $%
0 $. Thus $f_j$ is upper triangular and $g_j$ is lower triangular. We may
further conjugate by a diagonal matrix to achieve the upper entry in the
matrix representing $f_j$ is $1$. With this normalisation we have 
\begin{eqnarray}  \label{5.2}
f_j=\left( 
\begin{array}{cc}
\lambda_j & 1 \\ 
0 & 1/\lambda_j%
\end{array}%
\right), && g_j=\left( 
\begin{array}{cc}
i & 0 \\ 
c_j & -i%
\end{array}
\right)
\end{eqnarray}
and $\lambda_j\to 1$. We calculate that 
\begin{equation}
\gamma_j=\gamma(f_j,g_j) = c_j \left(c_j+\frac{2 i \left(\lambda_j
^2-1\right)}{\lambda_j }\right).
\end{equation}
It follows that $c_j\to \pm \sqrt{\gamma}$, for one choice of sign. Thus $%
f_j\to f$ and $g_j\to g$ 
\begin{eqnarray}
f=\left( 
\begin{array}{cc}
1 & 1 \\ 
0 & 1%
\end{array}%
\right), && g=\left( 
\begin{array}{cc}
i & 0 \\ 
\sqrt{\gamma} & -i%
\end{array}
\right)
\end{eqnarray}
This group is nonelementary by Theorem \ref{Th: Jorgenson} so $\gamma \neq 0$%
. One case see this since as soon as $|\beta_j|<\frac{1}{2}$, J\o rgensen's
inequality gives $|\gamma_j|>\frac{1}{2}$. This completes the proof.
\end{proof}

\noindent \textbf{Example.} We next give an example to show the last case
can occur. That is $\beta_j\to 0$ and $\gamma_j\to \gamma$ (and $|\gamma|=1$%
). Let

\begin{eqnarray}
f=\left( 
\begin{array}{cc}
1 & 1 \\ 
0 & 1%
\end{array}%
\right), && h=\left( 
\begin{array}{cc}
1 & 0 \\ 
a_\infty & 1%
\end{array}
\right)
\end{eqnarray}
Then $\langle f, h \rangle$ is (a representation of) the two bridge figure
of eight knot group if $a_\infty=(1+i\sqrt{3})/2$. The relator in this group
is 
\begin{equation}  \label{relator}
hfh^{-1}fh = fhf^{-1}hf
\end{equation}
If we perform $(p,0)$ Dehn surgery on the complement of this knot we obtain
a Kleinian group generated by two elliptics of order $p$, $f_p$ and $h_p$.
Then up to conjugacy $f_p$ and $h_p$ have the form 
\begin{eqnarray*}
f_p=\left( 
\begin{array}{cc}
e^{i\pi/p} & 1 \\ 
0 & e^{-i\pi/p}%
\end{array}%
\right), \; \; h_p=\left( 
\begin{array}{cc}
e^{i\pi/p} & 0 \\ 
a_p & e^{-i\pi/p}%
\end{array}%
\right)
\end{eqnarray*}
Then the relator (\ref{relator}) holds if and only if 
\begin{equation*}
a_p=\frac{1}{2} \left(3-2 \cos \frac{2 \pi }{p} -\sqrt{-1-4 \cos \frac{2 \pi 
}{p} +2 \cos \frac{4 \pi }{p} }\right)
\end{equation*}
or its conjugate. Thus $\langle f_p,h_p\rangle$ is the group obtained from
this orbifold Dehn surgery. As such it is discrete and nonelementary. Then,
as $p\to \infty$, 
\begin{eqnarray*}
\beta(f_p)=\beta(h_p)=-4\sin^2\frac{\pi}{p} & \to & 0=\beta(f), \; \; 
\mathrm{and} \\
\gamma(f_p,h_p) = a_p(-2+a_p+2 \cos \frac{2 \pi }{p}) & \to & a_\infty^2 =
\gamma(f,g)
\end{eqnarray*}
As $\beta(f_p)=\beta(h_p)$ we find an involution $g_p$ such that $%
h_p=g_pf_pg_p^{-1}$ with $\gamma(f_p,g_p)= \gamma(f_p,h_p)$.

\bigskip

We next have the following lemma.

\begin{lemma}
\label{Le: const sequence beta}Let $(\gamma _{j},\beta _{j},\widetilde{\beta 
}_{j})$ be a sequence of principal characters of two-generator Kleinian
groups $\left \langle f_{j},g_{j}\right \rangle ,$ and let $(\gamma ,\beta ,%
\widetilde{\beta })$ be the principal character of two-generator group $%
\left \langle f,\,g\right \rangle .$ Suppose that $\left( \gamma _{j},\beta
_{j}\right) $ converges to $\left( \gamma ,\beta \right) $ and $f$ is not
elliptic of order $p\in \{2,3,4,6\}.$ Then $\gamma \neq 0$ and $\gamma \neq
\beta .$
\end{lemma}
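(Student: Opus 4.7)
The plan is to reduce to the already-closed slice $\mathcal{D}^{\ast}$ of characters of the form $(\gamma,\beta,-4)$ by using Lemma \ref{2pars}, and then read off the two non-vanishing conclusions from Lemma \ref{gammabeta} and the elementary characterisation of $\gamma=0$ noted in Section~3.

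First I would verify that the hypotheses of Theorem \ref{Th: Ksubgp} hold for all sufficiently large $j$. By Lemma \ref{beta & order}, the condition that $f$ is not elliptic of order $p\in\{2,3,4,6\}$ is equivalent to $\beta \notin \{-4,-3,-2,-1\}$. Since this exceptional set is finite and $\beta_j\to\beta$, we may pass to a tail so that each $\beta_j$ is bounded away from the four exceptional values, and hence $f_j$ is also not elliptic of order in $\{2,3,4,6\}$. Theorem \ref{Th: Ksubgp} then gives that $\Gamma_j^{g}=\langle f_j,\, g_j f_j g_j^{-1}\rangle$ is Kleinian for all large $j$.

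Next I would apply Lemma \ref{2pars} to each such $j$: there is an elliptic $\phi_j$ of order $2$ such that $\Gamma_{\phi_j}=\langle f_j,\phi_j\rangle$ is a discrete $\mathbb{Z}_2$-extension of $\Gamma_j^{g}$ with principal character $(\gamma_j,\beta_j,-4)$. Since $\Gamma_j^{g}$ is non-elementary, so is the supergroup $\Gamma_{\phi_j}$; combined with discreteness (and Lemma \ref{virtually Kleinian} in the easy direction) this makes $\Gamma_{\phi_j}$ Kleinian. Hence $(\gamma_j,\beta_j,-4)\in\mathcal{D}^{\ast}$ for all large $j$, and $(\gamma_j,\beta_j,-4)\to(\gamma,\beta,-4)$ in $\mathbb{C}^{2}$. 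By Theorem \ref{D* closed}, the limit $(\gamma,\beta,-4)$ is itself the principal character of a Kleinian group $\langle F,\Phi\rangle$.

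Finally I would rule out the two degenerations. If $\gamma=0$ then $\gamma(F,\Phi)=0$, which by the observation at the opening of Section~3 (\cite[Theorem~4.3.5]{Beardon}) forces $F$ and $\Phi$ to share a fixed point on $\overline{\mathbb{C}}$, so $\langle F,\Phi\rangle$ is elementary, contradicting that it is Kleinian. If $\gamma=\beta$, then Lemma \ref{gammabeta} applied to $\langle F,\Phi\rangle$, whose character is $(\beta,\beta,-4)$, forces it to be dihedral, again contradicting non-elementarity. This gives $\gamma\neq 0$ and $\gamma\neq\beta$. The main obstacle is the bookkeeping in the middle step: one must be sure that Theorem \ref{Th: Ksubgp} really does apply eventually along the sequence (which is why we need $\beta$ to avoid the four values $-4,-3,-2,-1$), and that the index-two extension $\Gamma_{\phi_j}\supset\Gamma_j^{g}$ inherits Kleinianness, so that Theorem \ref{D* closed} can be invoked on an honest sequence in $\mathcal{D}^{\ast}$.
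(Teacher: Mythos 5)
Your proof is correct, but it reaches the conclusion by a genuinely different route from the paper. Both arguments begin identically: since $\beta_j\to\beta$ and (by Lemmas \ref{elem by para} and \ref{beta & order}) the forbidden orders correspond exactly to $\beta\in\{-4,-3,-2,-1\}$, the generators $f_j$ eventually avoid those orders, so Theorem \ref{Th: Ksubgp} makes $\Gamma_j^{g}=\langle f_j, g_jf_jg_j^{-1}\rangle$ Kleinian. From there the paper stays with $\Gamma_j^{g}$ itself: it is a Kleinian group generated by two elements of the same trace with commutator parameter $\gamma_j(\gamma_j-\beta_j)$, so Cao's universal bound $|\gamma_j(\gamma_j-\beta_j)|\geq 0.198$ applies, and passing to the limit gives $|\gamma(\gamma-\beta)|\geq 0.198>0$, hence both non-vanishing conclusions at once (and quantitatively). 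You instead pass to the $\mathbb{Z}_2$-extension $\Gamma_{\phi_j}$ of Lemma \ref{2pars}, use Lemma \ref{virtually Kleinian} to see it is Kleinian with character $(\gamma_j,\beta_j,-4)$, invoke the closedness of $\mathcal{D}^{\ast}$ (Theorem \ref{D* closed}) to realise the limit $(\gamma,\beta,-4)$ as the character of an actual Kleinian group, and then kill the degenerations $\gamma=0$ and $\gamma=\beta$ by the common-fixed-point observation and Lemma \ref{gammabeta} respectively. Your route is self-contained within the paper's own machinery and needs no external quantitative input, but it leans on the heavier Theorem \ref{D* closed} (itself resting on J\o rgensen's algebraic convergence theorem) and yields only the qualitative statement; the paper's route imports Cao's constant and in exchange gets the stronger, explicit lower bound on $|\gamma(\gamma-\beta)|$ in the limit. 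There is no circularity in your argument, since Theorem \ref{D* closed} is established before this lemma and does not depend on it.
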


\begin{proof}
Since $f$\ is not elliptic of order $p\in \{2,3,4,6\} $, $f_{j}$ is likewise
not elliptic of these orders for all but finitely many $j$. Applying Theorem %
\ref{Th: Ksubgp}, $\left \langle f_{j},g_{j}f_{j}g_{j}^{-1}\right \rangle $
is a sequence of Kleinian groups with corresponding principal character $%
\left( \gamma _{j}\left( \gamma _{j}-\beta _{j}\right) ,\beta _{j},\beta
_{j}\right) $ which converge to $(\gamma (\gamma -\beta ),\beta ,\beta ).$
Now $\left
\langle f_{j},g_{j}f_{j}g_{j}^{-1}\right \rangle $ is Kleinian
group generated by two elements of the same trace. Thus $\gamma _{j}\left(
\gamma _{j}-\beta _{j}\right) \neq 0$ and hence $\gamma _{j}\neq 0$ and $%
\gamma _{j}\neq \beta _{j}.$

Now we may apply a result by C.Cao \cite[Theorem 5.1]{Cao} which gives a
univeral lower bound on the commutator parameter for such groups. 
\begin{equation*}
\gamma _{j}(\gamma _{j}-\beta _{j})\geq 0.198.
\end{equation*}

Thus, $\lim_{j\rightarrow \infty }\gamma _{j}(\gamma _{j}-\beta _{j})=\gamma
(\gamma -\beta )\geq 0.198.$ It follows that $\gamma \neq 0$ and $\gamma
\neq \beta .$
\end{proof}

\begin{theorem}
\label{d2closed}The two complex dimensional space 
\begin{equation*}
\mathcal{D}_{2}=\{ \left( \gamma ,\, \beta \right) :(\gamma ,\beta ,\tilde{%
\beta})\text{ is the principal character of a Kleinian group for some }%
\widetilde{\beta }\}
\end{equation*}%
\ is closed in the two complex dimensional space $\mathbb{C}^{2}$ in the
usual topology.
\end{theorem}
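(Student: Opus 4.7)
The plan is to reduce the closedness of $\mathcal{D}_2$ to that of the slice $\mathcal{D}^{\ast}$ established in Theorem \ref{D* closed}, by lifting a convergent sequence in $\mathcal{D}_2$ to a convergent sequence in $\mathcal{D}^{\ast}$ via the involutions produced by Lemma \ref{2pars}. Let $(\gamma_j,\beta_j)\to(\gamma,\beta)$ with each $(\gamma_j,\beta_j)\in\mathcal{D}_2$ realised by a Kleinian group $\langle f_j,g_j\rangle$ of principal character $(\gamma_j,\beta_j,\tilde{\beta}_j)$. Passing to a subsequence, there are two mutually exclusive cases: either (i) $f_j$ is not elliptic of any order $p\in\{2,3,4,6\}$ for every $j$ (equivalently $\beta_j\notin\{-1,-2,-3,-4\}$), or (ii) all $f_j$ share a common torsion order $p\in\{2,3,4,6\}$, so $\beta_j\equiv\beta^{\ast}\in\{-1,-2,-3,-4\}$ is constant.

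In case (i), Theorem \ref{Th: Ksubgp} guarantees that $\Gamma^g_j=\langle f_j,g_jf_jg_j^{-1}\rangle$ is Kleinian. Lemma \ref{2pars} then produces an order-$2$ elliptic $\phi_j$ for which $\langle f_j,\phi_j\rangle$ is a discrete $\mathbb{Z}_2$-extension of $\Gamma^g_j$ with principal character $(\gamma_j,\beta_j,-4)$; by Lemma \ref{virtually Kleinian} this extension is itself Kleinian, so $(\gamma_j,\beta_j,-4)\in\mathcal{D}^{\ast}$. Theorem \ref{D* closed} now gives $(\gamma,\beta,-4)\in\mathcal{D}^{\ast}\subset\mathcal{D}$, whence $(\gamma,\beta)\in\mathcal{D}_2$. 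Lemma \ref{Le: const sequence beta} supplies the additional non-degeneracy $\gamma\neq 0$ and $\gamma\neq\beta$ whenever the limit $\beta$ lies outside $\{-1,-2,-3,-4\}$.

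In case (ii) one cannot in general project to $\mathcal{D}^{\ast}$, because the construction of Lemma \ref{2pars} may produce an elementary group (for instance, when $\beta^{\ast}=-4$ the character $(\gamma_j,-4,-4)$ is always elementary by Lemma \ref{two order 2}), so we work directly with $\langle f_j,g_j\rangle$. Conjugate so that $f_j=f$ is a fixed elliptic of order $p$. If, after further conjugation by the centraliser of $f$, some subsequence of $g_j$ converges in the spherical metric, then J\o rgensen's Theorem \ref{Th: Jorgenson} yields a Kleinian algebraic limit $\langle f,g\rangle$ with principal character $(\gamma,\beta^{\ast},\tilde{\beta})$, and $(\gamma,\beta^{\ast})\in\mathcal{D}_2$. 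Otherwise no bounded subsequence exists, $|\tilde{\beta}_j|\to\infty$, and one proceeds by a rescaling-and-Nielsen-equivalence argument modelled on Case~(b)(ii) of the proof of Theorem \ref{D* closed}.

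The main obstacle is this unbounded sub-case of (ii): with $f$ fixed, the centraliser of $f$ acts on $g_j$ only by diagonal conjugation and the matrix of $g_j$ has no bounded normalisation, so there is no naive algebraic limit in $\mathrm{PSL}(2,\mathbb{C})$. The crux is to show either that such degeneration is ruled out while $(\gamma_j,\beta^{\ast})$ stays bounded---exploiting the identity $\sinh^2(\delta+i\theta)=4\gamma/(\beta\tilde{\beta})$, which forces the axes of $f$ and $g_j$ to approach coincidence as $|\tilde{\beta}_j|\to\infty$ and so drives the group toward an elementary configuration before the limit is reached---or else to construct an alternative Kleinian representative of each $(\gamma_j,\beta^{\ast})$ with $\tilde{\beta}_j$ uniformly bounded, thereby reducing to the convergent sub-case.
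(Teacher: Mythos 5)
Your Case (i) is sound and matches the paper's route: project to $(\gamma_j,\beta_j,-4)$ via Lemma \ref{2pars}, note the projection is Kleinian by Theorem \ref{Th: Ksubgp} together with Lemma \ref{virtually Kleinian}, and apply Theorem \ref{D* closed}. The genuine gap is your Case (ii), which you do not finish. You fix $f_j=f$ an elliptic of small order, correctly observe that $g_j$ need not subconverge when $|\tilde{\beta}_j|\to\infty$ because the centraliser of $f$ gives too little freedom to normalise $g_j$, and then offer only two candidate strategies (rule out the degeneration via the axis-distance identity, or replace the representative by one with bounded $\tilde{\beta}_j$) without carrying either out. As written the small-torsion case is asserted, not proved, and it is exactly the case where the projection to $\mathcal{D}^{\ast}$ can fail to be Kleinian.

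The paper closes this case without taking any limit of groups at all. If $f_j$ is elliptic of order $p\in\{2,3,4,5,6\}$ then, after a subsequence, $\beta_j=-4\sin^2(k\pi/p)$ is constant, and the projected group $\langle f_j,\phi_j\rangle$ is a discrete \emph{elementary} group generated by elliptics of orders $p$ and $2$. By the classification (Theorem \ref{classific of disc elem} and Table 1), if such a group is finite its order is at most that of $A_5$, so $[f_j,\phi_j]$ is elliptic of order at most $6$ and $\gamma_j$ ranges over a finite set of values; after a further subsequence $(\gamma_j,\beta_j)$ is constant, and the limit $(\gamma,\beta)$ is literally one of the terms, which lies in $\mathcal{D}_2$ by hypothesis since $\langle f_j,g_j\rangle$ is Kleinian with third parameter $\tilde{\beta}_j$. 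If instead $\langle f_j,\phi_j\rangle$ is infinite elementary, the classification forces a Euclidean group with a common fixed point, so $\gamma(f_j,\phi_j)=\gamma(f_j,g_j)=0$, contradicting discreteness and non-elementarity of $\langle f_j,g_j\rangle$. This finiteness observation --- that in the small-torsion case the sequence $(\gamma_j,\beta_j)$ is eventually constant --- is the missing idea; it makes your problematic unbounded sub-case moot rather than something to be overcome.
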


\begin{proof}[Proof]
Suppose that $\{(\gamma _{j},\beta _{j})\}$ is a sequence in $\mathcal{D}%
_{2} $ with limit $\left( \gamma ,\, \beta \right) $ in $\mathbb{C}^{2}.$ By
definition there is a sequence $\tilde{\beta}_j$ so that $\{(\gamma
_{j},\beta _{j},\tilde{\beta} _{j})\}$ is the principal character of a
Kleinian group. The we project to the principal character $\{(\gamma
_{j},\beta _{j},-4)\}$ of $\langle f_j,\phi_j\rangle$. If infinitely many of
these groups are Kleinian, the result we seek follows from Theorem \ref{D*
closed}. After passing to a subsequence we are left with the case $\langle
f_j,\phi_j\rangle$ is elementary for every $j$.

\noindent \textbf{Case (i)}. Suppose $f_j$ is not elliptic of order $p\in
\{2,3,4,6\}$ for infinitely many $j$. Then Lemma \ref{Le: const sequence
beta} tells us that $\gamma_j\not \in \{0,\beta_j\}$ and hence $\langle
f_j,\phi_j\rangle$ is Kleinian, so this case cannot occur.

\noindent \textbf{Case (ii)}. We may now suppose, after passing to a
subsequence, that each $f_j$ is elliptic of order $p\in \{2,3,4,5,6\}$, $%
\beta(f_j)=-4\sin^2\frac{k \pi}{p}=\beta$, $(k,p)=1$. $\langle
f_j,\phi_j\rangle$ is a discrete elementary group generated by elliptics of
orders $p$ and $2$. The order of any such group is less than that of $A_5$.
If infinitely many of these groups are finite, then in fact $[f_j,\phi]$ is
elliptic of order $1,2,3,4,5$, or $6$. Thus $\gamma_j$ lies in a finite set
of values. Again, after a subsequence we have $\gamma_j=\gamma$. Then $%
\{(\gamma _{j},\beta _{j},\tilde{\beta} _{j})\}=\{(\gamma,\beta,\tilde{\beta}
_{j})\}$ and the latter group is Kleinian by hypothesis. Thus $%
(\gamma,\beta)\in D_2$.

\noindent \textbf{Case (iii)}. We are left with the situation that $\langle
f_j,\phi_j\rangle$ is generated by elliptics of orders $p$ and $2$, infinite
and elementary for each $j$. The classification tells us this only happens
for the Euclidean groups and any such group has a common fixed point. Then $%
\gamma(f_j,\phi_j)=0=\gamma(f_j,g_j)$ which is not possible as $\langle
f_j,g_j\rangle$ is a Kleinian group.
\end{proof}

\end{document}